\newtheorem{theorem}{Theorem}
\newtheorem{corollary}[theorem]{Corollary}
\newtheorem{lemma}[theorem]{Lemma}
\newtheorem{algorithm}[theorem]{Algorithm}
\theoremstyle{definition}
\newtheorem{defn}[theorem]{Definition}
\newcommand{\cell}{\mathcal{C}}
\newcommand{\mfd}{\mathcal{M}}
\newcommand{\mfdjr}{\mathcal{M}_\mathrm{JR}}
\newcommand{\ppirr}{{$\mathbb{P}^2$-ir\-re\-du\-ci\-ble}}
\newcommand{\R}{\mathbb{R}}
\newcommand{\regina}{\emph{Regina}}
\newcommand{\tri}{\mathcal{T}}
\newcommand{\trijr}{\mathcal{T}_\mathrm{JR}}
\newcommand{\twisted}{\stackrel{\smash{\protect\raisebox{-1mm}[0pt][0pt]%
    {$\scriptstyle\sim$}}}{\times}}
\newcommand{\Z}{\mathbb{Z}}
\begin{document}

\title[A new approach to crushing 3-manifold triangulations]%
    {A new approach to crushing \\ 3-manifold triangulations}
\author{Benjamin A.~Burton}
\address{School of Mathematics and Physics \\
    The University of Queensland \\
    Brisbane QLD 4072 \\
    Australia}
\email{bab@maths.uq.edu.au}
\thanks{This is the full journal version of a conference paper that appeared in
    \textit{SCG ’13: Proceedings of the Twenty-Ninth Annual Symposium on
    Computational Geometry}, ACM, 2013.}
\thanks{The author is supported by the Australian Research Council
    under the Discovery Projects funding scheme (project DP1094516).}

\subjclass[2000]{%
    Primary
    57N10; 
    Secondary
    57Q15, 
    68W05} 
\keywords{3-manifolds, triangulations, normal surfaces,
    prime decomposition, algorithms}

\begin{abstract}
    The crushing operation of Jaco and Rubinstein is a powerful
    technique in algorithmic 3-manifold topology: it enabled the first
    practical implementations of 3-sphere recognition and prime
    decomposition of orientable manifolds, and it plays a prominent role
    in state-of-the-art algorithms for unknot recognition and testing
    for essential surfaces. Although the crushing operation will always
    reduce the size of a triangulation, it might alter its topology, and
    so it requires a careful theoretical analysis for the settings in
    which it is used.

    The aim of this short paper is to make the crushing operation more
    accessible to practitioners, and easier to generalise to new
    settings. When the crushing operation was first introduced, the
    analysis was powerful but extremely complex. Here we give a new
    treatment that reduces the crushing process to a sequential
    combination of three ``atomic'' operations on a cell decomposition,
    all of which are simple to analyse. As an application, we generalise
    the crushing operation to the setting of non-orientable 3-manifolds,
    where we obtain a new practical and robust algorithm for
    non-orientable prime decomposition. We also apply our crushing
    techniques to the study of non-orientable minimal triangulations.
\end{abstract}

\maketitle

%
%

\section{Introduction}

Algorithms in computational 3-manifold topology often exhibit an enormous gap
between theory and practice.
Theoretical solutions are now known for a large number
of difficult 3-dimensional topological problems, ranging from smaller problems
such as recognising the unknot \cite{haken61-knot} or recognising the 3-sphere
\cite{rubinstein95-3sphere} through to decomposition into geometric
pieces \cite{jaco95-algorithms-decomposition}
and of course the full homeomorphism (or ``topological equivalence'') problem
\cite{haken62-homeomorphism,jaco05-lectures-homeomorphism,matveev03-algms}.
Although these results are of high significance to the
mathematical community, many remain
algorithms in theory only---such algorithms are often far too intricate
to implement, and far too slow to run.

In the last decade, however, there has been strong progress in the realm
of practical, usable algorithms on 3-manifolds.
For instance, there are now practical implementations
of unknot recognition, 3-sphere recognition and orientable prime decomposition
\cite{andreeva02-webservice,burton04-regina}, and recently more
complex algorithms such as testing for closed essential surfaces 
have become viable \cite{burton13-large,burton12-ws}.

A key component in many of these practical algorithms is the
\emph{crushing procedure} of Jaco and Rubinstein \cite{jaco03-0-efficiency}.
This procedure was developed as part of their theory of \emph{0-efficiency},
drawing on earlier unpublished work of Andrew Casson.
It operates in the context of normal surface theory,
a common algorithmic toolkit for 3-manifold topologists.
In essence, the crushing process modifies a triangulation to
eliminate ``unwanted'' normal spheres and discs, whereupon the resulting
triangulation is called \emph{0-efficient}
(we give a more precise definition shortly).  This brings
both theoretical and practical advantages: 0-efficient
triangulations are typically smaller and easier to study,
and algorithms upon them are easier to formulate---often significantly so
\cite{jaco03-0-efficiency,jaco03-decision,li11-genus,rubinstein04-smallsfs}.

Although the full process of obtaining a 0-efficient triangulation
requires worst-case exponential time, recent techniques based on combinatorial
optimisation have made this extremely fast in a range of experimental
settings \cite{burton13-large,burton12-unknot}.
A notable application of crushing has been in 3-sphere
recognition: here the introduction of Jaco and Rubinstein's 0-efficiency
techniques was a major turning point that
made 3-sphere recognition practical to implement for the first time
\cite{burton13-regina,jaco03-0-efficiency}.

In summary:
normal surface theory makes difficult 3-manifold
problems \emph{decidable}, whereas crushing and 0-efficiency often
play a key role in making the resulting algorithms \emph{practical}.
It is therefore important for practitioners in computational 3-manifold
topology to understand crushing and
0-efficiency, and to be able to apply them to new settings.
The aims of this paper are (i)~to make the crushing operation more
accessible to
the wider computational topology community, (ii)~to simplify its analysis
so that the techniques are easier to use and generalise, and
(iii)~to apply this simplified analysis to the non-orientable setting,
yielding a new practical and robust algorithm for non-orientable
prime decomposition.

In detail: the crushing procedure
eliminates unwanted normal spheres and discs from
a triangulation by cutting the manifold open along them, collapsing
the resulting spheres or discs on the boundary to points, and then further
``flattening'' the resulting cell decomposition until we once again obtain a
(different) triangulation.  Importantly, this crushing procedure
(i)~is simple to implement, and (ii)~always simplifies the triangulation
by reducing the number of tetrahedra (neither of which are true for the
related operation of cutting along a normal surface and
retriangulating).  The downside is that crushing could change
the topology of the underlying 3-manifold in unintended ways, and so this
crushing process is ``destructive'';
however, the possible changes are often both simple and detectable.

One difficulty with Jaco and Rubinstein's original paper is that,
although their techniques are extremely powerful, the accompanying
analysis is extremely complex: they study the potential effects of the
crushing procedure through a series of detailed arguments
as they collapse chains of truncated prisms and product regions
throughout the triangulation.
A second difficulty is that their analysis is restricted
to orientable 3-manifolds only.

In Section~\ref{s-crush} of this paper we both
simplify and generalise these arguments.
The key result is Lemma~\ref{l-crush} (the \emph{crushing
lemma}), which shows that---after the initial act of cutting along
and collapsing the
original normal surface---the entire Jaco-Rubinstein
crushing procedure can be expressed
as a sequential combination of three local
atomic operations on a cell decomposition:
flattening a triangular or bigonal pillow to a face,
and flattening a bigon face to an edge.
Therefore, to analyse the ``destructive'' consequences of crushing
in any given setting,
we merely need to examine what can happen independently under
each of these atomic operations.
All three operations are simple to analyse:
Lemma~\ref{l-atomic} lists the possible consequences of each operation,
and Corollary~\ref{c-jrcrush} packages these together to describe
the overall effects of the full crushing process.

We emphasise that these results are general.
We never assume orientability, and all results apply to compact manifolds
both with or without boundary.
Moreover, the key crushing lemma applies equally well to
\emph{ideal triangulations}, which triangulate non-compact manifolds
by allowing vertices whose links are higher genus surfaces.
The analysis of atomic operations
in Lemma~\ref{l-atomic} and Corollary~\ref{c-jrcrush}
is also
straightforward in the ideal case, but the consequences of crushing
become more numerous, and so in this short paper we restrict
this latter analysis to triangulations of compact manifolds only.

In Section~\ref{s-app} we apply our results to
develop the first practical algorithm for computing the prime
decomposition of 3-manifolds
to encompass both the orientable and non-orientable cases.\footnote{%
    Recall that \emph{prime decomposition} asks us to decompose a given
    3-manifold into a connected sum of prime 3-manifolds.
    The \emph{connected sum} $M \# N$ of two manifolds
    $M$ and $N$ is formed by removing a small
    ball from each summand and gluing the summands together along
    the resulting sphere boundaries.}

For orientable manifolds, a modern implementation of prime
decomposition works by repeatedly crushing away normal spheres using the
Jaco-Rubinstein procedure, and then ``reading off'' prime summands from the
resulting collection of disconnected triangulations (some summands will
have disappeared but we can restore these using homology).
It is simple to discard trivial (3-sphere) summands, since the
efficiency-based 3-sphere recognition algorithm dovetails into this
procedure naturally.
The blueprint for this algorithm is laid out in the original
0-efficiency paper \cite{jaco03-0-efficiency}; see \cite{burton13-regina}
for a modern ``ready to implement'' version.

For non-orientable manifolds, however, the current situation is much worse:
the only available algorithm is the older Jaco-Tollefson method
\cite{jaco95-algorithms-decomposition}, where we must
build a collection of disjoint embedded 2-spheres within the input
triangulation using a complex series of cut-and-paste operations,
and then cut along these
2-spheres and retriangulate to obtain the individual summands
(an expensive operation that could vastly increase the
number of tetrahedra).  Detecting trivial summands is also
significantly more complex to implement in this setting.

In Section~\ref{s-app} we bring the non-orientable algorithm in line
with its simpler orientable cousin:
using the new generalised results of Section~\ref{s-crush}, we
show that one can crush away normal spheres and then
``read off'' the summands (again restoring missing summands via homology).
There is an important complication:
if the input manifold contains an embedded two-sided
projective plane then the Jaco-Rubinstein crushing process could fail.
We show that even in this setting, we can still run the algorithm:
it might still succeed, and if it does fail due to a two-sided projective
plane then we obtain a simple certificate alerting us to this fact
(this is the sense in which the algorithm is ``robust'').

We finish in Section~\ref{s-app-minimal} with a second application of
our crushing techniques, this time to the study of minimal
triangulations.  In particular, we show that minimal triangulations of
closed {\ppirr} 3-manifolds must be 0-efficient (modulo a handful
of exceptions), and as a result we identify several constraints on the
combinatorial structures of such triangulations.
Most of these constraints are already known, but
past proofs have relied on ad-hoc arguments;
the purpose of Section~\ref{s-app-minimal} is to show that they
all follow directly from 0-efficiency.  Overall, this discussion on
minimal triangulations is a simple but informative extension
of Jaco and Rubinstein's original discussion in the orientable case
\cite{jaco03-0-efficiency}.

Beyond its theoretical contributions, the results of this paper
are important for practitioners.  In particular, the non-orientable
prime decomposition algorithm of Section~\ref{s-app}
will soon appear in the software package {\regina} \cite{regina}.

For the special case of closed orientable manifolds, Fowler describes
a different approach to simplifying 0-efficiency arguments using
\emph{spines},
elaborating on an earlier argument of Casson \cite{fowler03-0-efficient}.
See also Matveev's book \cite{matveev03-algms}, which includes a more general
discussion of cutting along
normal surfaces in the setting of special and almost special spines.


\subsection{Preliminaries}

As is common in computational 3-manifold topology, we work not with
simplicial complexes but with smaller and more flexible structures.
A \emph{generalised triangulation} $\tri$
is defined to be a collection of $n$ abstract tetrahedra,
some or all of whose $4n$ faces are affinely identified (or ``glued
together'')
in pairs.  The underlying topological space is often (but not always) a
3-manifold $\mfd$, in which case we say that $\tri$ \emph{triangulates}
$\mfd$.

In a generalised triangulation, we allow two faces of the same
tetrahedron to be identified.  Moreover, as a consequence of the face
identifications, we might find that several edges of a tetrahedron
become identified, and likewise with vertices.
The \emph{link} of a vertex $V$ in a triangulation is the
surface obtained as the frontier of a
small regular neighbourhood of $V$.

For convenience, we define several useful (and mutually exclusive)
subclasses of generalised triangulations.
A \emph{closed triangulation} is one that triangulates
a closed 3-manifold (here every tetrahedron face must be glued to some
partner, and every vertex link must be a sphere).
A \emph{bounded triangulation} is one that triangulates a
compact 3-manifold with non-empty boundary (here one or more tetrahedron
faces are left unglued, and all vertex links must be spheres or discs).
An \emph{ideal triangulation} is one in which every tetrahedron face
is glued to some partner, but some vertices have links that are not
spheres (e.g., tori, Klein bottles, or other higher-genus surfaces).
Ideal triangulations are used
to represent non-compact 3-manifolds by removing the tetrahedron vertices;
a famous example is Thurston's 2-tetrahedron
ideal triangulation of the figure eight knot complement 
\cite{thurston78-lectures}.

In this paper we modify triangulations to obtain more general
\emph{cell decompositions}.
Informally, these are natural extensions of
generalised triangulations that allow 3-cells other than tetrahedra.
A cell decomposition begins with
a collection of abstract 3-cells, which are topological 3-balls
whose boundaries are decomposed into curvilinear polygonal faces
(in particular, we allow small 2-faces such as bigons,
and we allow small 3-cells that are ``pillows'' bounded by a
pair of opposite 2-faces).
Moreover, we endow each edge on the boundary of each 3-cell with an
affine structure (i.e., a homeomorphism from the edge to the interval
$[0,1]$).
We explicitly list the possible types of 3-cell as we encounter them
in this paper, and so we do not go into further detail here.

To form a cell decomposition, we identify (or ``glue'' together) some or
all of the 2-faces of these 3-cells in pairs, using homeomorphisms that
map edges to edges and vertices to vertices, and that restrict to affine
maps on the edges.
This generalises the affine maps between 2-faces that we use for triangulations.
As with triangulations, we allow two
2-faces of the same 3-cell to be identified.
For a concrete example of a cell decomposition, see
Definition~\ref{d-jrcrush} below.

We define an \emph{invalid edge} of a generalised triangulation or
cell decomposition to be one that (as a consequence of the
2-face gluings) becomes identified with itself in reverse.
A triangulation or cell decomposition is called \emph{valid}
if it does not contain an invalid edge.
The underlying topological space of an invalid triangulation or
cell decomposition cannot be a 3-manifold, since a small regular
neighbourhood of the midpoint of an invalid edge will be bounded by $\R P^2$.

A \emph{normal surface} in a generalised triangulation $\tri$
is a properly embedded surface\footnote{%
    A surface $S$ is \emph{properly embedded} in a triangulation $\tri$
    if $S$ has no self-intersections, and the boundary of $S$ is
    precisely where $S$ meets the boundary of $\tri$.}
in $\tri$ that meets each tetrahedron in
a (possibly empty) collection of curvilinear triangles and
quadrilaterals, as illustrated in Figure~\ref{fig-normaldiscs}.
A \emph{vertex linking surface} (also called a \emph{trivial surface})
is a connected normal surface formed entirely from triangles;
any such surface must surround some vertex $V$ of
the triangulation as illustrated in Figure~\ref{fig-vertexlink},
and effectively triangulates the link of $V$.

\begin{figure}[tb]
\centering
\subfigure[Triangles and quadrilaterals]{%
    \label{fig-normaldiscs}%
    \hspace{1cm}\includegraphics[scale=0.5]{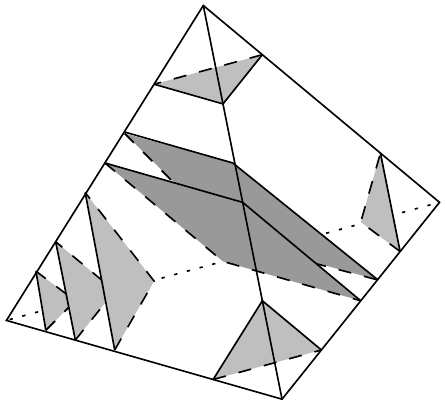}\hspace{1cm}}
\hspace{2cm}
\subfigure[A vertex linking surface]{%
    \label{fig-vertexlink}%
    \includegraphics[scale=0.8]{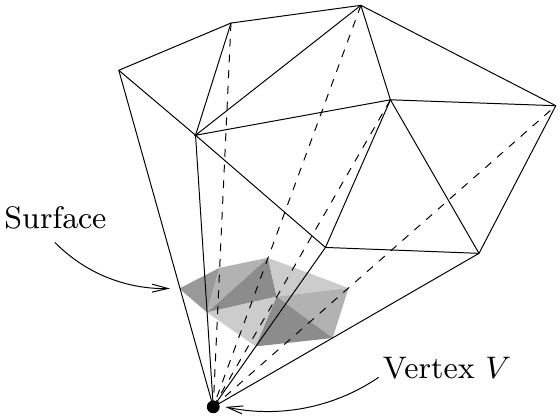}}
\caption{Normal surfaces within a triangulation}
\end{figure}

The concept of \emph{0-efficiency} is defined as follows
\cite{jaco03-0-efficiency}.
If $\tri$ is a closed or ideal triangulation, then we call $\tri$
0-efficient if and only if it contains no non-trivial normal spheres.
If $\tri$ is a bounded triangulation, then we call $\tri$ 0-efficient
if and only if it contains no non-trivial normal discs.%
    \footnote{Jaco and Rubinstein show that, under appropriate assumptions,
    if $\tri$ contains no non-trivial normal discs then
    $\tri$ must contain no non-trivial normal spheres
    also \cite[Proposition~5.15]{jaco03-0-efficiency}.}

Jaco and Rubinstein describe a general ``destructive'' crushing
procedure which they use for many purposes, such as creating 0-efficient
triangulations of manifolds, and decomposing orientable manifolds into
connected sums.  This procedure is the main focus of
this paper, and we describe it now in detail.

\begin{defn} \label{d-jrcrush}
    Let $S$ be a normal surface in some generalised triangulation $\tri$.
    The \emph{Jaco-Rubinstein crushing procedure} operates on $S$ as
    follows:
    \begin{enumerate}
        \item \label{en-jrcrush-cut}
        We cut $\tri$ open along the normal surface $S$.
        This converts the triangulation into a cell decomposition
        with a large variety of possible cell types (such as truncated
        tetrahedra, triangular or quadrilateral prisms, and truncated
        triangular prisms, some of which are
        illustrated in Figure~\ref{fig-jrslices}).
        If $S$ is two-sided in $\tri$
        then we obtain two new copies of $S$ on the
        boundary of this cell decomposition,
        and if $S$ is one-sided in $\tri$ then we obtain one new copy of
        the double cover of $S$ on the boundary.

        \begin{figure}[t]
            \centering
            \includegraphics[scale=0.7]{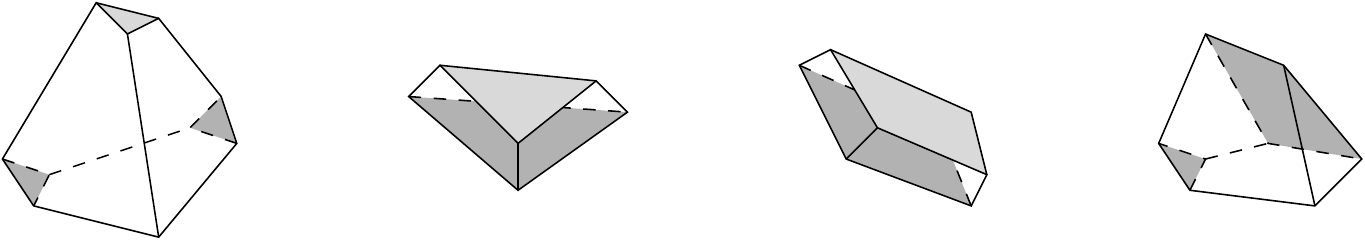}
            \caption{Examples of cells obtained after cutting open along $S$}
            \label{fig-jrslices}
        \end{figure}

        \begin{figure}[t]
            \centering
            \includegraphics[scale=0.7]{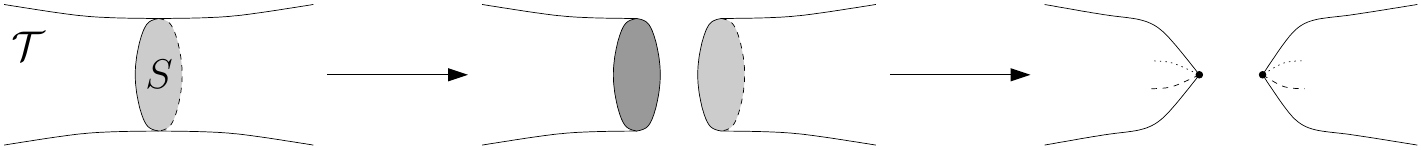}
            \caption{Collapsing copies of $S$ on the boundary to points}
            \label{fig-crushbdry}
        \end{figure}

        \item \label{en-jrcrush-collapse}
        We then collapse (or ``shrink'')
        each copy of $S$ on the boundary to a point
        (using the quotient topology), as illustrated in
        Figure~\ref{fig-crushbdry}.
        Specifically, if $S$ was two-sided in $\tri$ then we collapse the two
        copies of $S$ on the boundary to two points, and if $S$ was
        one-sided in $\tri$ then we collapse the double cover of $S$ on
        the boundary to one point.
        This converts the
        triangulation into a cell decomposition $\cell$,
        with cells of the following types:
        \begin{itemize}
            \item \emph{3-sided footballs},
            illustrated in Figure~\ref{fig-jr3football},
            which we obtain from regions of $\tri$ between two
            parallel triangles of $S$,
            or between a triangle of $S$ and a tetrahedron vertex;
            \item \emph{4-sided footballs},
            illustrated in Figure~\ref{fig-jr4football},
            which we obtain from regions of $\tri$ between two
            parallel quad\-rilaterals of $S$;
            \item \emph{triangular purses},
            illustrated in Figure~\ref{fig-jrpurse},
            which we obtain from regions of $\tri$
            between a quadrilateral of $S$
            and nearby triangles or tetrahedron vertices;
            \item \emph{tetrahedra},
            which we obtain from the central regions of tetrahedra in $\tri$
            that do not contain any quadrilaterals of $S$.
        \end{itemize}

        \begin{figure}[tb]
        \centering
        \subfigure[A 3-sided football]{\label{fig-jr3football}%
            \includegraphics[scale=0.8]{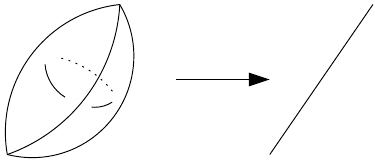}}
        \hspace{0.8cm}
        \subfigure[A 4-sided football]{\label{fig-jr4football}%
            \includegraphics[scale=0.8]{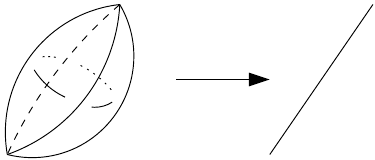}}
        \hspace{0.8cm}
        \subfigure[A triangular purse]{\label{fig-jrpurse}%
            \includegraphics[scale=0.8]{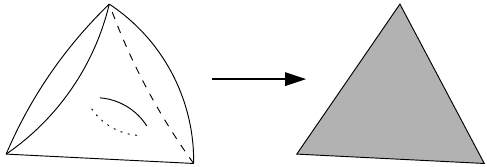}}
        \caption{Destructively flattening non-tetrahedron cells}
        \label{fig-jrcrush}
        \end{figure}

        \item \label{en-jrcrush-flatten}
        We next eliminate any non-tetrahedron cells, as
        illustrated in Figure~\ref{fig-jrcrush}, by simultaneously
        flattening all footballs to edges, and flattening all triangular
        purses to triangular faces (again using the quotient topology).
        Note that all remaining tetrahedron cells are preserved in this step
        (the flattening operations only affect cells with bigon faces).

        We can now ``read off'' a resulting generalised triangulation
        $\trijr$, which is defined \emph{only} by the surviving tetrahedra
        and the resulting identifications between their 2-dimensional faces.
        In particular:
        \begin{itemize}
            \item Any triangles, edges and/or vertices that do not belong to
            a tetrahedron are removed entirely,
            as illustrated in Figure~\ref{fig-clean-edgeface}.
            We might even lose entire connected components in this way.
            \item If different pieces of a triangulation are connected along
            pinched edges or vertices
            then these pieces will ``fall apart'', as illustrated in
            Figure~\ref{fig-clean-pinch} (since there are no 2-dimensional
            faces holding them together).
        \end{itemize}
    \end{enumerate}

    \begin{figure}[tb]
    \centering
    \subfigure[Edges or faces not in a tetrahedron will disappear]%
        {\label{fig-clean-edgeface}%
        \qquad\includegraphics[scale=0.55]{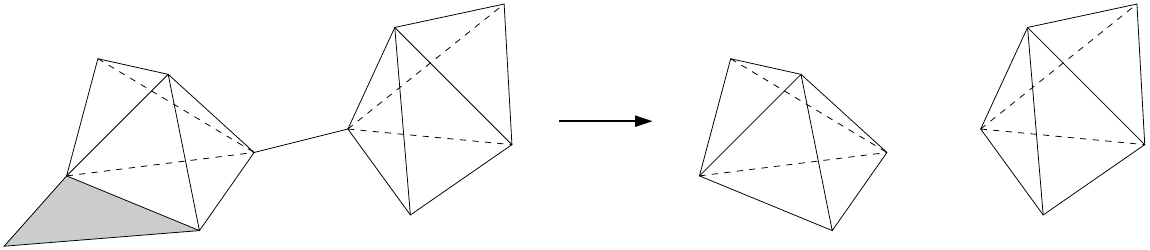}\qquad}
    \subfigure[Tetrahedra joined along pinched edges or vertices
        will fall apart]%
        {\label{fig-clean-pinch}%
        \qquad\qquad\includegraphics[scale=0.55]{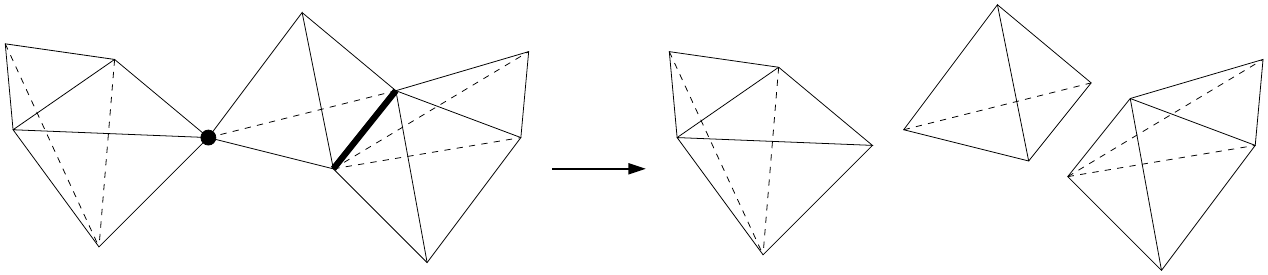}\qquad\qquad}
    \caption{Triangulations are defined by face gluings only}
    \label{fig-clean}
    \end{figure}

    The final result of the crushing procedure
    is this generalised triangulation $\trijr$.
    Note that this might be disconnected, empty or invalid,
    and might not even represent a 3-manifold.

    For convenience, we refer to steps~\ref{en-jrcrush-cut} and
    \ref{en-jrcrush-collapse} as \emph{non-destructive crushing} (yielding
    the cell complex $\cell$), and
    all three steps~\ref{en-jrcrush-cut}--\ref{en-jrcrush-flatten}
    as \emph{destructive crushing} (yielding the generalised
    triangulation $\trijr$).  Unless otherwise specified,
    ``crushing'' always refers to the full destructive operation.
\end{defn}

We observe
that each original tetrahedron $\Delta$ of $\tri$ can only give rise to at
most one tetrahedron of $\trijr$ (and only if $\Delta$
contains no quadrilaterals of $S$).  It follows that $\trijr$ has
strictly fewer tetrahedra than $\tri$, unless $S$ is a union of vertex linking
surfaces, in which case $\trijr$ and $\tri$ are isomorphic (i.e., the
crushing procedure has no effect).

Note that the intermediate cell decomposition $\cell$ and the
final triangulation $\trijr$ are well-defined. In particular,
the flattening operations above can never result in any 2-face of a
cell being identified with more than one partner 2-face,
and the final triangulation $\trijr$ is independent of any
``order'' in which we flatten the footballs and/or purses of $\cell$.

A key property of this procedure (which we generalise in
Corollary~\ref{c-jrcrush}) is that,
for orientable manifolds, any ``destructive'' changes are both limited
and detectable:

\begin{theorem}[Jaco and Rubinstein \cite{jaco03-0-efficiency}]%
    \label{t-jrcrushor}
    Let $\tri$ be a generalised triangulation of a compact
    orientable 3-manifold $\mfd$ (with or without boundary),
    and let $S$ be a normal sphere or
    disc in $\tri$.  Then, if we destructively crush $S$ using the
    Jaco-Rubinstein procedure, we obtain a valid generalised triangulation
    $\trijr$ whose underlying 3-manifold $\mfdjr$ is obtained from
    $\mfd$ by zero or more of the following operations:
    \begin{itemize}
        \item undoing connected sums, i.e.,
        replacing some intermediate manifold $\mfd'$ with the disjoint union
        $\mfd'_1 \cup \mfd'_2$, where $\mfd' = \mfd'_1\ \#\ \mfd'_2$;
        \item cutting open along properly embedded discs;
        \item filling boundary spheres with 3-balls;
        \item deleting 3-ball, 3-sphere, $\R P^3$, $L_{3,1}$ or
        $S^2 \times S^1$ components.\footnote{%
            Regarding notation: $\R P^3$ denotes real projective space,
            $L_{3,1}$ is a lens space, and
            $S^2 \times S^1$ is the product space of the 2-sphere and the
            circle.}
    \end{itemize}
\end{theorem}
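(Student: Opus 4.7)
The plan is to derive the theorem directly from the machinery built in Section~\ref{s-crush}. By Lemma~\ref{l-crush}, destructive crushing of $S$ decomposes into (a) the non-destructive step, which cuts $\tri$ along $S$ and collapses each new boundary copy of $S$ to a point, producing the cell decomposition $\cell$, followed by (b) a finite sequence of atomic operations on $\cell$—flattening triangular pillows, flattening bigonal pillows, and flattening bigon faces—whose topological effects are enumerated in Lemma~\ref{l-atomic}. I would prove the theorem in two parts: first, show that the non-destructive step on an orientable $\mfd$ realises precisely the connected-sum undoing, disc-cutting and sphere-filling operations listed; second, check that in the orientable setting the atomic operations of Lemma~\ref{l-atomic} can only contribute further sphere-filling and the deletion of components from $\{\R P^3,\,L_{3,1},\,S^2\times S^1\}$ together with 3-balls and 3-spheres.

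For the non-destructive step, orientability of $\mfd$ forces $S$ (a sphere or disc) to be two-sided, so cutting produces $\mfd\setminus\mathrm{nbhd}(S)$ with either one (sphere, non-separating case) or two (sphere separating; or disc) new boundary copies of $S$. In the separating sphere case this is exactly the ``undoing a connected sum'' operation; in the non-separating sphere case it fits inside the same operation once we account for a subsequent $S^2\times S^1$ summand appearing in step (b); in the disc case it is ``cutting open along a properly embedded disc''. Next, collapsing each new boundary 2-sphere to a point is topologically the cone, i.e.\ attaching a 3-ball to that sphere boundary—precisely ``filling boundary spheres with 3-balls''—while collapsing the boundary of a disc copy to a point is harmless because it produces a boundary 2-sphere that is itself then filled by a cone. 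So the entire non-destructive step lies within the allowed operations.

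For the atomic step, I would apply Lemma~\ref{l-atomic} repeatedly and pull back its conclusions through the orientability hypothesis: any intermediate cell complex remains orientable (each atomic operation preserves orientability), and this knocks out the non-orientable possibilities that would otherwise appear, leaving only no-op, deletion of a component homeomorphic to $S^3$, $\R P^3$, $L_{3,1}$ or $S^2\times S^1$, and filling boundary spheres with 3-balls (together with ``falling apart'' along pinched edges/vertices, which is a form of undoing a connected sum). Composing the non-destructive step with any finite sequence of these keeps us inside the permitted list, giving the claimed $\mfdjr$. Finally, validity of $\trijr$ follows because an invalid edge in $\trijr$ would force $\cell$ to contain a 2-face identified with itself in an orientation-reversing way, contradicting that $\cell$ is an orientable cell decomposition.

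The main obstacle is the orientability reduction in the second part: one must verify that the short list $\{S^3,\R P^3,L_{3,1},S^2\times S^1\}$ genuinely exhausts the closed summands that can be discarded by the atomic flattenings when orientability is imposed. Concretely, this requires going through each case of Lemma~\ref{l-atomic} and identifying the ``destructive'' outcomes with explicit standard triangulations (the small lens spaces and twisted $S^2$ bundles that arise from identifying boundary discs of a pillow with particular rotation/reflection patterns), and showing that once orientability is assumed only the orientable entries in that list survive. Everything else—checking that disconnected components separate correctly along pinched vertices and edges, and that each non-manifold intermediate step is resolved by a subsequent atomic move—is routine bookkeeping within the framework of Corollary~\ref{c-jrcrush}.
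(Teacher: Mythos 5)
Your proposal is correct in substance, but it is worth being clear about what ``the paper's proof'' of this statement actually is: Theorem~\ref{t-jrcrushor} is presented purely as a citation to Jaco and Rubinstein, whose original argument is the complex global analysis of simultaneously collapsing chains of truncated prisms and product regions. What you have written is instead the orientable specialisation of the paper's own later machinery---Lemma~\ref{l-crush} to serialise the flattening into atomic moves, Lemma~\ref{l-atomic} to classify each move, plus the separate analysis of the non-destructive cut-and-collapse step---which is precisely the proof the paper gives for Corollary~\ref{c-jrcrush}. So your route is ``different'' from the nominal source of the theorem but identical in spirit to the paper's re-derivation; what it buys is exactly what the paper advertises: local, sequential case analysis in place of a global simultaneous collapse. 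The key hypothesis you need to import is that an orientable manifold contains no \emph{two-sided} projective planes (since $\R P^2 \times I$ is non-orientable), which is what licenses applying Lemma~\ref{l-atomic} to every intermediate cell decomposition; your observation that each atomic move preserves orientability then closes the induction, and validity of $\trijr$ follows because the only mechanism in Lemma~\ref{l-atomic} that could create an invalid edge is flattening a bigon forming a two-sided $\R P^2$ (not, as you suggest, an orientation-reversing self-identification of a 2-face of $\cell$).

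Two small imprecisions to tidy up. First, orientability does not ``knock out'' $\R P^3$ or $L_{3,1}$---both are orientable, can genuinely arise (e.g.\ a one-sided $\R P^2$ embeds in $\R P^3$, and identified pillow faces yield $L_{3,1}$ and $\R P^3$ components), and appear in the theorem's list; the only outcome of Corollary~\ref{c-jrcrush} that orientability excludes is the twisted bundle $S^2 \twisted S^1$. Second, your enumeration of atomic-move outcomes omits cutting along a properly embedded disc and the genuine undoing of connected sums produced by flattening a bigon that forms a boundary-touching disc or a separating internal sphere; these are still on the theorem's allowed list, so the conclusion is unaffected, but the enumeration as stated is incomplete.
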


For reference, Jaco and Rubinstein do not present Theorem~\ref{t-jrcrushor}
in this unified form---the theorem statement above collects the results
of several detailed arguments from
throughout their original paper \cite{jaco03-0-efficiency}.

We emphasise again that all generalised triangulations and
cell decompositions in this paper are defined entirely by their
3-cells and the pairwise identifications between the 2-faces of these 3-cells.
In particular, if we modify a cell decomposition so that some
edge or 2-face does not belong to a 3-cell then that edge or 2-face will
disappear (as in Figure~\ref{fig-clean-edgeface}), and if different pieces of
the cell decomposition become connected along pinched edges or vertices then
those pieces will fall apart (as in Figure~\ref{fig-clean-pinch}).


\section{The crushing lemma} \label{s-crush}

In this section we present our ``atomic'' formulation of the Jaco-Rubinstein
crushing procedure.
We begin with the crushing lemma (Lemma~\ref{l-crush}),
which establishes the sufficiency of our three atomic operations, and
shows that they can be performed sequentially (as opposed to simultaneously).
Lemma~\ref{l-atomic} then analyses the precise behaviour of each operation
on a compact manifold, and Corollary~\ref{c-jrcrush} uses this to
prove a generalisation of Theorem~\ref{t-jrcrushor} that covers both
orientable and non-orientable manifolds.

We emphasise that the crushing lemma is completely general: the
triangulation may be non-orientable, or ideal, or even invalid.
In this sense, the crushing lemma is intended as a launching point
for generalising crushing and 0-efficiency
technology to a wide range of settings
(such as ideal triangulations, which we do not pursue in detail in this
short paper).

The proof of the crushing lemma uses an algorithmic approach: we show
how the full crushing procedure can be performed one atomic operation
at a time.  We note that this algorithm is intended to assist with the
theoretical analysis, not the implementation; a practical implementation
could simply flatten non-tetrahedron cells ``in bulk''.\footnote{%
    The reader is invited to peruse {\regina}'s source code \cite{regina}
    to see how this can be done; see the function
    \texttt{NNormalSurface::\allowbreak crush()}.}

\begin{lemma}[Crushing lemma] \label{l-crush}
    Let $\tri$ be a generalised triangulation containing a normal surface $S$.
    Let $\cell$ be the cell decomposition obtained by
    non-destructively crushing $S$, as described in
    steps~(\ref{en-jrcrush-cut})--(\ref{en-jrcrush-collapse})
    of Definition~\ref{d-jrcrush}, and let
    $\trijr$ be the final triangulation obtained at the end of the
    destructive crushing procedure, after
    flattening away all non-tetrahedron cells in
    step~(\ref{en-jrcrush-flatten}) of Definition~\ref{d-jrcrush}.
    Then $\trijr$ can be obtained from
    $\cell$ by a sequence of zero or more of the following atomic operations,
    one at a time, in some order:
    \begin{itemize}
        \item flattening a triangular pillow to a triangular face,
        as shown in Figure~\ref{fig-jrtripillow};
        \item flattening a bigonal pillow to a bigon face, as shown in
        Figure~\ref{fig-jrbipillow};
        \item flattening a bigon face to an edge, as shown in
        Figure~\ref{fig-jrbigon}.
    \end{itemize}
    As in Definition~\ref{d-jrcrush},
    after each atomic operation we remove any ``orphaned'' 2-faces,
    edges or vertices that do not belong to a 3-cell, and we pull apart any
    pieces of the cell decomposition that are connected along pinched
    edges or vertices
    (as in Figure~\ref{fig-clean}).
\end{lemma}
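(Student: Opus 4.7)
The plan is to exhibit an explicit algorithm that transforms the cell decomposition $\cell$ into the triangulation $\trijr$, applying the atomic operations one at a time until only tetrahedron cells remain and performing the cleanup steps of Figure~\ref{fig-clean} (removing orphaned faces, edges and vertices, and separating pieces joined only along pinched identifications) after each step. Since Definition~\ref{d-jrcrush} enumerates only three non-tetrahedron cell types in $\cell$—3-sided footballs, 4-sided footballs, and triangular purses—it suffices to describe a short sequence of atomic operations that eliminates each type.

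For a triangular purse, I first flatten each of its bigon faces to an edge using the third atomic operation, which reduces the cell to a triangular pillow, and then flatten this pillow to a single triangular face using the first atomic operation. For a 3-sided football, flattening one bigon face to an edge causes the remaining two bigons, which were each adjacent to the flattened one, to share both of their edges and hence to form a bigonal pillow; I then flatten this pillow to a bigon face, and finally flatten the resulting bigon face to an edge. For a 4-sided football, one initial bigon-flattening produces a 3-sided football, which is handled as above. Crucially, each of these operations touches only bigon faces and cells bounded solely by bigons or by pairs of triangles, so the tetrahedron cells of $\cell$ are left intact throughout.

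The central point to verify is that this sequential algorithm yields the same result as the simultaneous flattening of Definition~\ref{d-jrcrush}. I would argue this by observing that each atomic operation implements exactly one of the identifications already built into the simultaneous flattening: flattening a triangular (respectively bigonal) pillow identifies its two parallel faces, and flattening a bigon to an edge identifies its two edges. Since the simultaneous flattening is the quotient of $\cell$ by the union of all such identifications and the tetrahedron cells are preserved, performing them one at a time in any feasible order produces the same quotient space, which by construction is $\trijr$.

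The main obstacle is the bookkeeping required to confirm that each atomic operation remains well-defined on the current cell decomposition and to track its effect on neighbouring cells. For example, when I flatten a bigon of a football, the cell on the other side of that bigon also loses that bigon from its boundary and its local combinatorial structure simplifies in tandem; and when I flatten a pillow, the cells across its two faces become glued together. A careful case analysis shows that no 2-face is ever forced to be glued to more than one partner (matching the well-definedness remark after Definition~\ref{d-jrcrush}), and that the cleanup procedure of Figure~\ref{fig-clean} absorbs any degenerate situations that arise (such as components falling apart along pinched identifications), leaving the sequential algorithm in lockstep with the simultaneous flattening until it terminates at $\trijr$.
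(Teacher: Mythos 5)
Your overall strategy---sequentialising the simultaneous flattening via an explicit algorithm that introduces the intermediate pillow cell types---is the same as the paper's, but your case analysis has concrete gaps exactly where the paper has to work hardest. First, you never consider the possibility that two bigon faces of the \emph{same} football are identified with each other. For a 3-sided football the paper must choose a bigon face that is \emph{not} glued to another face of the same cell (possible only because the number of bigon faces is odd); if you flatten a self-identified bigon you collapse two faces at once and can be left with a cell bounded by a single bigon, which is not one of your cell types and is degenerate. Likewise, flattening one bigon of a 4-sided football yields \emph{either} a 3-sided football \emph{or} a bigonal pillow depending on whether that bigon joins the football to itself; your claim that it always yields a 3-sided football is false in the self-identified case. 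Second, you impose no global ordering on the moves, but the order matters: the paper eliminates all triangular and bigonal pillows \emph{before} flattening any football bigon, precisely so that the cell on the far side of the flattened bigon is guaranteed to be a football, purse, or bigonal pyramid (each of which degrades to another known type) rather than a bigonal pillow (which would again degenerate to a one-bigon cell). Your per-cell recipe, applied in an arbitrary order, does not rule this out, and your appeal to ``a careful case analysis'' is exactly the analysis that is missing.

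Two smaller omissions: you never introduce the bigonal pyramid, the intermediate cell type that arises when a bigon face of a triangular purse is flattened from the other side, so your list of possible adjacent-cell shapes is incomplete; and you give no termination argument, which is needed because your moves on one cell keep changing its neighbours (the paper uses the decreasing quantity ``number of non-tetrahedron cells plus number of bigons''). Your final ``same quotient space'' justification is also slightly misaligned with the setting---$\trijr$ is defined combinatorially by surviving tetrahedra and their face gluings after orphaned material is discarded, not as a topological quotient---though the paper itself is terse on this identification, so I would weight the self-identification and ordering issues as the substantive gaps.
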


\begin{figure}[tb]
\centering
\subfigure[Flattening a triangular pillow]{\label{fig-jrtripillow}%
    \includegraphics[scale=0.75]{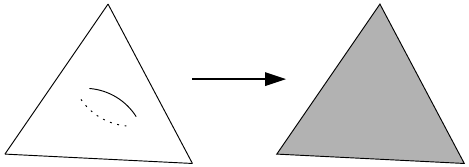}}
\hspace{0.55cm}
\subfigure[Flattening a bigonal pillow]{\label{fig-jrbipillow}%
    \includegraphics[scale=0.75]{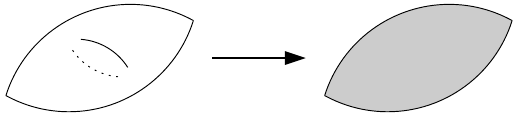}}
\hspace{0.55cm}
\subfigure[Flattening a bigon face]{\label{fig-jrbigon}%
    \includegraphics[scale=0.75]{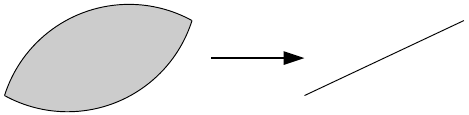}}
\caption{Atomic moves for the Jaco-Rubinstein crushing procedure}
\label{fig-atomic}
\end{figure}

Note that each atomic operation might be performed several times, and that
multiple instances of one operation might be interspersed with
instances of the others.

\begin{proof}
    Recall that in the cell decomposition $\cell$, there are only
    three types of cells that are not tetrahedra:
    \emph{3-sided footballs},
    \emph{4-sided footballs} and
    \emph{triangular purses},
    which $\trijr$ flattens to edges, edges and triangles respectively.
    In addition to these, we now describe three \emph{intermediate}
    cell types which might appear as we incrementally convert
    $\cell$ into $\trijr$:
    \begin{itemize}
        \item \emph{triangular pillows}, as seen earlier in
        Figure~\ref{fig-jrtripillow};
        \item \emph{bigonal pillows}, as seen earlier in
        Figure~\ref{fig-jrbipillow};
        \item \emph{bigonal pyramids}, a new cell type shown in
        Figure~\ref{fig-jrbipyramid}, which $\trijr$ flattens to a
        triangle.
    \end{itemize}
    For reference, all six original and intermediate
    cell types are illustrated in Figure~\ref{fig-jrorigint}.

    \begin{figure}[tb]
    \centering
    \begin{tabbing}
    {}\=
    \subfigure[A 3-sided football]{%
        \includegraphics[scale=0.8]{jr3football}}
    \hspace{1.0cm}
    \=\subfigure[A 4-sided football]{%
        \includegraphics[scale=0.8]{jr4football}}
    \hspace{1.0cm}
    \=\subfigure[A triangular purse]{%
        \includegraphics[scale=0.8]{jrpurse}} \\
    \>\subfigure[A bigonal pyramid]{\label{fig-jrbipyramid}%
        \includegraphics[scale=0.75]{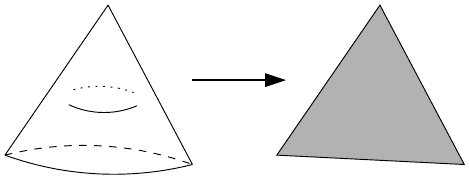}}
    \>\subfigure[A triangular pillow]{%
        \includegraphics[scale=0.75]{jrtripillow}}
    \>\subfigure[A bigonal pillow]{%
        \includegraphics[scale=0.75]{jrbipillow}}
    \end{tabbing}
    \caption{All six original and intermediate cell types}
    \label{fig-jrorigint}
    \end{figure}

    The crushing process requires us to simultaneously
    flatten footballs to edges and purses to triangles;
    our task now is to find a good \emph{order} in which to do this, so
    that we obtain a sequence of atomic operations as described above.
    The key complication is that local moves on one cell might change
    the shapes of adjacent cells, and so we must choose our ordering
    carefully to avoid creating any unexpected new cell types.

    Our solution is to convert $\cell$ into $\trijr$ by applying
    the following algorithm:
    \begin{enumerate}
        \item If there are any triangular pillows, flatten them to triangles.%
        \label{en-crush-start}
        \item If there are any bigonal pillows, flatten them to bigons.
        \item If there is a 3-sided football, then choose one of its
        bigon faces
        which is not identified with another face of the same 3-cell,
        flatten this bigon to an edge (which will also change the
        shape of the adjacent cell, if there is one), and return to
        step~(\ref{en-crush-start}).%
        \label{en-crush-3football}
        \item If there is a 4-sided football, then choose any
        one of its bigon faces, flatten this bigon to an edge,
        and return to step~(\ref{en-crush-start}).%
        \label{en-crush-4football}
        \item If there are any bigonal pyramids or triangular purses,
        flatten all of their bigon faces to edges and
        return to step~(\ref{en-crush-start}).%
        \label{en-crush-pillows}
    \end{enumerate}

    We first note that the choice in step~(\ref{en-crush-3football})
    is always possible because a 3-sided football has an odd number of
    bigon faces.  Moreover, the various flattening operations never
    introduce any new cell types beyond the six listed above:
    \begin{itemize}
        \item In steps~(\ref{en-crush-3football}) and
        (\ref{en-crush-4football}), the 3-sided football becomes a
        bigonal pillow, and the 4-sided football becomes \emph{either}
        a bigonal pillow or a 3-sided football according to whether
        the chosen bigon joins the football with itself or a different cell.
        The adjacent cell (if there is one) changes as follows:
        Because we have already eliminated
        bigonal pillows, the adjacent cell must be a 3-sided football,
        a 4-sided football, a triangular purse, or a bigonal pyramid.
        Flattening the bigon then converts this to a bigonal pillow,
        a 3-sided football,
        a bigonal pyramid, or a triangular pillow respectively.

        \item In step~(\ref{en-crush-pillows}), the only non-tetrahedron
        cell types remaining are triangular pillows with one or two
        bigon sides, and so flattening bigons converts all of these
        cells to triangular pillows.
    \end{itemize}

    We run the algorithm above until there are no
    remaining non-tetrahedron cells.  The algorithm terminates because
    in each iteration it strictly reduces the number of
    non-tetrahedron cells plus the number of bigons.
    The resulting triangulation is then $\trijr$, and the only
    atomic operations that the algorithm performs are
    flattening triangular pillows, bigonal pillows and bigons,
    one at a time.
\end{proof}

One might observe that the crushing lemma simply replaces
the three original moves of Figure~\ref{fig-jrcrush} with the three
atomic moves of Figure~\ref{fig-atomic}.
Nevertheless, this brings important advantages:
\begin{itemize}
    \item The new atomic moves operate on smaller subcomplexes
    (triangular pillows, bigonal pillows and bigon faces), which
    means fewer special cases or unusual behaviours to analyse.
    \item More importantly, our new atomic moves can be performed
    \emph{sequentially}, and can therefore be studied individually as
    \emph{local operations}.
    The original flattening moves of Figure~\ref{fig-jrcrush} must be done
    \emph{simultaneously} (otherwise we introduce many additional
    cell types each with their own moves and analyses),
    which means the original flattening moves must be studied as a
    complex \emph{global operation}
    (as Jaco and Rubinstein do in their original paper).
    \item We extend our analysis to more general settings, such as
    non-orientable triangulations and arbitrary ideal triangulations.
\end{itemize}

From this point onwards we restrict our attention to compact manifolds
(i.e., closed or bounded triangulations), and
study the possible outcomes of each of our three atomic moves.

\begin{lemma} \label{l-atomic}
    Let $\cell$ be a cell decomposition of a compact 3-manifold $\mfd$
    (with or without boundary) that contains no two-sided projective planes.
    Then applying one of the atomic moves of Lemma~\ref{l-crush} will
    yield a (valid) cell decomposition of a 3-manifold $\mfd'$,
    where either $\mfd' = \mfd$, or else $\mfd'$ is obtained from $\mfd$ by
    one of the following operations:
    \begin{itemize}
        \item If we flattened a triangular pillow,
        then $\mfd'$ might remove a single connected
        3-ball, 3-sphere or $L_{3,1}$ component from $\mfd$;
        \item If we flattened a bigonal pillow,
        then $\mfd'$ might remove a single connected
        3-ball, 3-sphere or $\R P^3$ component from $\mfd$;
        \item If we flattened a bigon face, then
        (i)~$\mfd'$ might be obtained by cutting $\mfd$ open
        along a properly embedded disc,
        (ii)~$\mfd'$ might be obtained by filling one boundary sphere of
        $\mfd$ with a 3-ball;
        (iii)~$\mfd'$ might be obtained by cutting $\mfd$ open
        along an embedded sphere and filling the two resulting
        boundary spheres with 3-balls;
        or
        (iv)~we might have $\mfd = \mfd'\ \#\ \R P^3$; that is,
        $\mfd'$ might remove a single $\R P^3$ summand from the
        connected sum decomposition of $\mfd$.
    \end{itemize}
\end{lemma}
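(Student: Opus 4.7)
The plan is to prove each of the three bullets by a local case analysis on how the relevant cell or face sits inside $\cell$. Each atomic flattening is genuinely local, affecting only the named cell together with the (at most two) $3$-cells meeting it, so the topological effect on $\mfd$ is determined by the identifications of the boundary $2$-faces (or edges) of that feature with the rest of $\cell$. In each case the template is the same: enumerate the combinatorial configurations, compute the topological change produced by the flattening, and use the hypothesis that $\mfd$ contains no two-sided $\R P^2$ to exclude configurations whose outcomes would fall outside the advertised list. We must also check that $\mfd'$ remains a manifold (no invalid edge, no bad vertex link), which follows from the same local inspection.

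For the two pillow moves the pillow is a topological $3$-ball with two $2$-faces on its boundary, so there are two principal subcases. Either those two faces of the pillow are glued to two \emph{distinct} $2$-faces elsewhere in $\cell$, in which case the pillow is a product region and flattening is a simple collapse with $\mfd' = \mfd$; or the two faces are identified with one another by one of the finitely many symmetries of a triangle (resp.\ bigon), in which case the pillow together with this identification is an entire connected component of $\cell$---a closed $3$-manifold built by identifying two hemispheres of $\partial B^{3}$. A short enumeration of these quotients under the full symmetry group of the $2$-face yields, for the triangular pillow, a candidate list containing $S^{3}$, $L_{3,1}$, and a small number of additional manifolds each of which is or contains a two-sided $\R P^{2}$; the latter are excluded by hypothesis. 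The bigonal case is analogous, but the smaller symmetry group of a bigon leaves only $S^{3}$ and $\R P^{3}$ on the candidate list, matching the lemma. In both moves the ``$3$-ball component'' outcome corresponds to the pillow's two faces already lying on $\partial\mfd$, so that the pillow is a standalone $3$-ball component that the flattening deletes.

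The bigon-face move is the main obstacle, because it has the most combinatorial configurations and the most varied topological outcomes. I would split into cases based on (i)~whether the two edges $e_1, e_2$ of the bigon $B$ are distinct or identified in $\cell$, and (ii)~whether the two sides of $B$ lie in distinct $3$-cells, in the same $3$-cell, or on $\partial\mfd$. The surviving configurations correspond to the four advertised outcomes: (a)~if $e_1 \ne e_2$ and $B$ is a properly embedded disc with manifold interior on both sides, flattening realises cutting $\mfd$ along this disc; (b)~if $B$ is boundary-parallel, flattening caps off a boundary sphere with a $3$-ball; (c)~if $e_1 = e_2$ via an orientation-preserving identification, then $B$ together with the common edge is an embedded $2$-sphere, and flattening is equivalent to cutting $\mfd$ along this sphere and filling both resulting boundary spheres with $3$-balls; and (d)~if $e_1 = e_2$ via an orientation-reversing identification, then $B \cup e$ is an embedded $\R P^{2}$ in $\mfd$ which by hypothesis must be one-sided, and the flattening realises the removal of an $\R P^{3}$ summand by the standard collapse of a twisted $I$-bundle neighbourhood of $\R P^{2}$. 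The delicate step is ruling out the mixed or degenerate configurations---for instance, a single edge $e_i$ identified with itself in reverse, or a vertex identification that would create a non-manifold point in $\mfd'$---without circular reasoning: any such configuration either already forces a two-sided $\R P^{2}$ in $\mfd$ or yields an invalid edge already in $\cell$, contradicting the hypotheses on $\mfd$ and on the validity of $\cell$.
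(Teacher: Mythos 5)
Your proposal is correct and follows essentially the same route as the paper: a local case analysis on whether the two bounding faces of a pillow (resp.\ the two edges of a bigon) are unidentified, lie on the boundary, or are identified with one another, with the no-two-sided-$\R P^2$ hypothesis used exactly where the paper uses it, namely to force a bigon whose edges are identified in reverse to close up into a \emph{one-sided} projective plane (giving the $\R P^3$-summand outcome) rather than a two-sided one. One small imprecision: the extra pillow quotients you propose to discard (those coming from orientation-reversing identifications of $F_1$ with $F_2$) are not manifolds containing a two-sided $\R P^2$ --- they have an edge identified with itself in reverse or a non-orientable vertex link --- so they are ruled out by the hypothesis that $\cell$ is a valid cell decomposition of the manifold $\mfd$, not by the projective-plane hypothesis; the paper excludes them on exactly those grounds, and you already invoke this mechanism elsewhere, so the fix is cosmetic.
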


\begin{proof}
    The pillow moves are easiest to handle.  Because they do not affect
    the 1-skeleton of $\cell$, it is clear that the resulting cell
    decomposition remains valid.  Let $F_1,F_2$ be the two faces bounding the
    (triangular or bigonal) pillow.
    \begin{itemize}
        \item If $F_1$ and $F_2$ both lie in the boundary of the
        manifold then the pillow
        is an entire 3-ball component, and flattening the pillow simply
        deletes this component.
        \item If $F_1$ and $F_2$ are identified together then the
        pillow is a single connected component of one of the following
        topological types:
        \begin{itemize}
            \item $L_{3,1}$ (obtained by identifying the faces of a
            triangular pillow with a twist);
            \item $\R P^3$ (obtained by identifying the faces of a bigon pillow
            with a twist);
            \item $S^3$ (obtained by identifying the faces of a triangular or
            bigon pillow without a twist).
        \end{itemize}
        Here we ignore orientation-reversing identifications between
        $F_1$ and $F_2$, because these imply that $\cell$ has either an edge
        identified with itself in reverse (i.e., an invalid
        cell decomposition) or a non-orientable vertex link.
        \item Otherwise, $F_1$ and $F_2$ are not identified (so their
        relative interiors are disjoint) and they are not both boundary,
        whereby flattening them together does not change the
        underlying 3-manifold.
    \end{itemize}

    Flattening a bigon face to an edge is a little more delicate, with
    several cases to consider.
    Let $e_1,e_2$ be the two edges bounding the bigon.
    \begin{itemize}
        \item If the entire bigon lies in the boundary of the manifold:
        \begin{itemize}
            \item If $e_1$ and $e_2$ are not identified (so their
            relative interiors are disjoint), then flattening them
            together does not change the underlying 3-manifold.

            \item If $e_1$ and $e_2$ are identified then (since $\mfd$
            contains no two-sided projective planes) this must be as a
            sphere.  Flattening the bigon then has the effect of filling
            this boundary sphere with a 3-ball, as illustrated in
            Figure~\ref{fig-bigoncap}.
        \end{itemize}

        \begin{figure}[tb]
            \centering
            \includegraphics[scale=0.7]{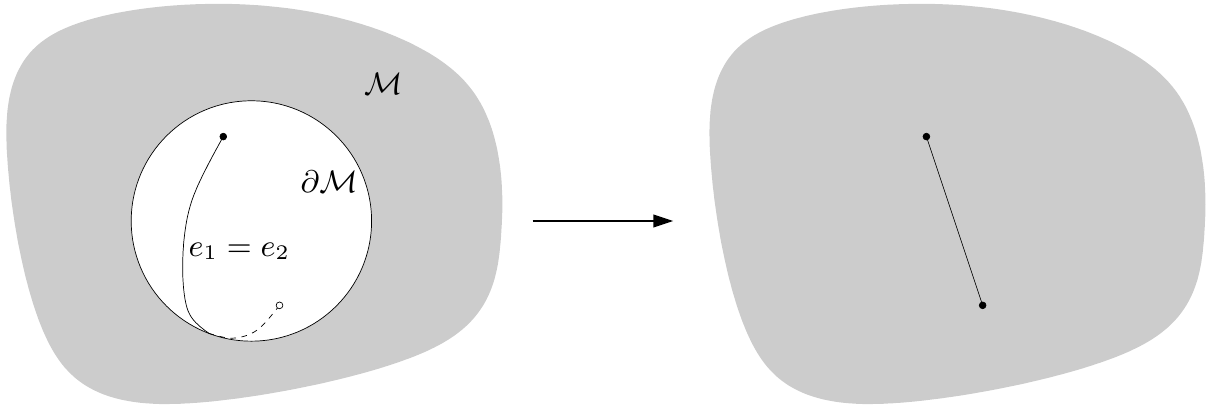}
            \caption{Flattening a bigon that forms a sphere on the boundary}
            \label{fig-bigoncap}
        \end{figure}

        \item If the bigon does not lie in the boundary and
        $e_1$ and $e_2$ are identified together so that the
        bigon forms a sphere,
        then flattening the bigon has the effect of slicing $\mfd$
        open along this sphere and filling the resulting boundary
        spheres with 3-balls, as illustrated in
        Figure~\ref{fig-bigonsphere}.  Note that this is true even if
        the edges and/or vertices of the bigon \emph{do} lie in the
        boundary, as illustrated in Figure~\ref{fig-bigonsphere-bdry}.

        \begin{figure}[tb]
            \centering
            \includegraphics[scale=0.7]{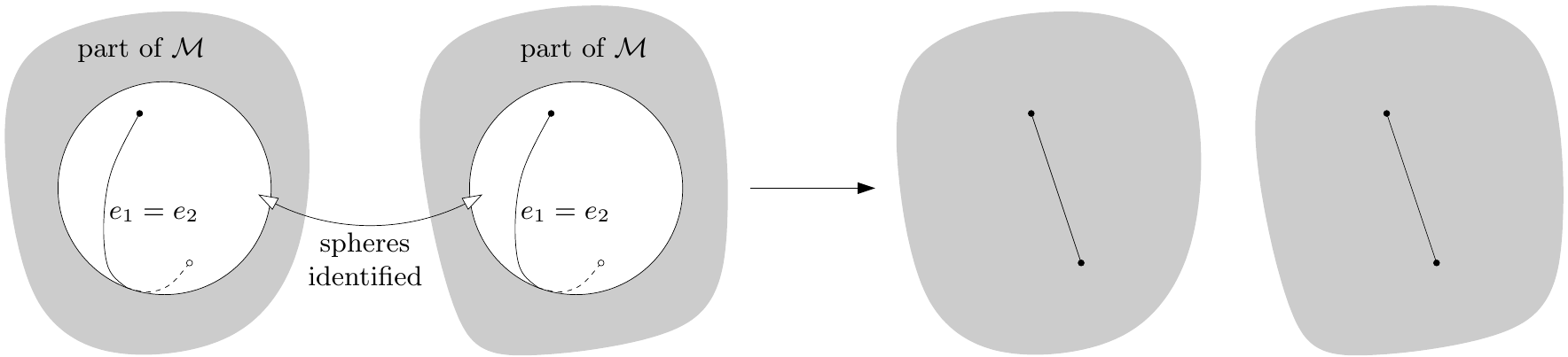}
            \caption{Flattening a bigon that forms an internal sphere}
            \label{fig-bigonsphere}
        \end{figure}

        \begin{figure}[tb]
            \centering
            \includegraphics[scale=0.7]{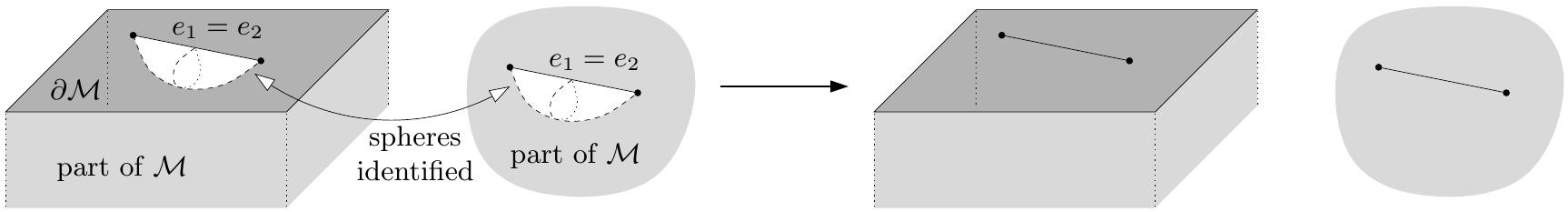}
            \caption{Flattening a bigon that forms a sphere touching the
                boundary}
            \label{fig-bigonsphere-bdry}
        \end{figure}

        \item If the bigon does not lie in the boundary and
        $e_1$ and $e_2$ are identified together so that the
        bigon forms a projective plane, then (by our initial assumptions)
        this must be a one-sided projective plane $P$.
        We can view the move in two stages: (i)~unglue the two cells on
        either side of the bigon (since this gluing will be lost after
        the move anyway), which yields two boundary bigons; and then
        (ii)~flatten these two boundary bigons to edges.

        If the entire bigon is internal to the manifold then,
        topologically, stage~(i) cuts along a sphere surrounding
        $P$ and discards the connected component containing $P$, and
        then stage~(ii) fills the remaining sphere boundary with a 3-ball.
        That is, we remove a single $\R P^3$ summand from the connected
        sum decomposition of $\mfd$.

        If the edge and/or vertex of this bigon lies in the
        boundary of $\mfd$ then stage~(i) may introduce temporary
        anomalies (such as punctures in vertex linking discs),
        but stage~(ii) fixes these so that, like the sphere case before,
        the full move has the same net topological effect regardless of
        whether the bigon is internal.

        Note that this operation does not create an invalid edge, even
        though the original edges $e_1,e_2$ that surround the bigon are
        identified in reverse.  This is because the projective plane is
        one-sided, and so stage~(i) creates a single sphere boundary
        formed from two bigons (the double cover of the original
        projective plane), which then allows us to flatten both boundary bigons
        without problems.  If the projective plane were two-sided then
        we would indeed create two invalid edges; we return to this
        possibility in Lemma~\ref{l-crush-invalid}.

        \item If the bigon does not lie in the boundary, and
        $e_1$ and $e_2$ are not identified but both $e_1$ and $e_2$
        \emph{do} lie in the boundary,
        then flattening the bigon has the effect of slicing
        $\mfd$ open along the corresponding disc, as illustrated in
        Figure~\ref{fig-bigonbdry}.

        \begin{figure}[tb]
            \centering
            \includegraphics[scale=0.9]{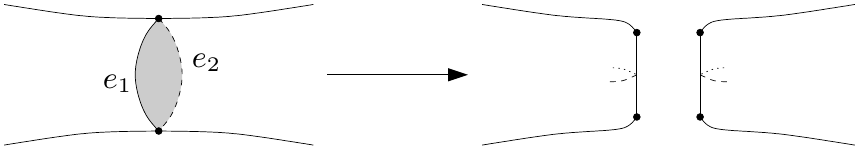}
            \caption{Flattening a bigon whose edges encircle the boundary}
            \label{fig-bigonbdry}
        \end{figure}

        \item Otherwise $e_1$ and $e_2$ are not identified (so their
        relative interiors are disjoint) and not both boundary, and so
        flattening them together does not change the
        underlying 3-manifold.
        \qedhere
    \end{itemize}
\end{proof}

Now that we understand the possible behaviour of each atomic move, we can
aggregate this information to understand the Jaco-Rubinstein crushing
procedure as a whole.  In the following result
we do this for arbitrary compact manifolds,
thereby generalising Theorem~\ref{t-jrcrushor} to both
orientable and non-orientable settings.  As in the previous lemma,
we exclude two-sided projective planes (which can lead to invalid edges);
however, even this exclusion can be partially overcome as we see later in
Section~\ref{s-app}.

\begin{corollary} \label{c-jrcrush}
    Let $\tri$ be a generalised triangulation of a compact
    3-manifold $\mfd$ (with or without boundary) that contains no
    two-sided projective planes, and let $S$ be a normal sphere or
    disc in $\tri$.  Then, if we destructively crush $S$ using the
    Jaco-Rubinstein procedure, we obtain a valid triangulation
    $\trijr$ whose underlying 3-manifold $\mfdjr$ is obtained from
    $\mfd$ by zero or more of the following operations:
    \begin{itemize}
        \item undoing connected sums, i.e.,
        replacing some intermediate manifold $\mfd'$ with the disjoint union
        $\mfd'_1 \cup \mfd'_2$, where $\mfd' = \mfd'_1\ \#\ \mfd'_2$;
        \item cutting open along properly embedded discs;
        \item filling boundary spheres with 3-balls;
        \item deleting 3-ball, 3-sphere, $\R P^3$, $L_{3,1}$,
        $S^2 \times S^1$ or twisted $S^2 \twisted S^1$ components.
    \end{itemize}
\end{corollary}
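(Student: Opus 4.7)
The plan is to use Lemma~\ref{l-crush} to rewrite the destructive part of the crushing procedure as a finite sequence of atomic moves on the cell decomposition $\cell$, and then to iterate Lemma~\ref{l-atomic} to track the topological effect of each move. Three subtasks organise the proof: (i)~analyse the non-destructive crushing step $\tri\to\cell$, which is not covered directly by Lemma~\ref{l-atomic}; (ii)~verify that the hypothesis ``no two-sided projective planes'' is preserved at every intermediate stage, so Lemma~\ref{l-atomic} continues to apply; and (iii)~aggregate the resulting list of modifications so that it matches the list in the corollary statement.

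For (i), note first that a sphere or disc is simply connected and hence two-sided in $\mfd$. Cutting along $S$ produces two boundary copies of $S$, and collapsing each copy to a point attaches a cone on a sphere (closed case) or on a disc (bounded case), each of which is a ball; hence $\cell$ genuinely decomposes a compact 3-manifold $\mfd^{(0)}$. Case analysis on whether $S$ separates yields the expected outcomes: a separating sphere corresponds to undoing a connected sum of $\mfd$; a non-separating sphere corresponds to removing an $S^2\times S^1$ or $S^2\twisted S^1$ summand, depending on whether the normal bundle of $S$ is orientation-preserving or orientation-reversing; and the two disc analogues correspond to undoing a connected sum (possibly combined with filling a boundary sphere) and to cutting along a properly embedded disc. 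Every outcome appears in the list given in the corollary.

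For (ii), I would check that none of the operations in the corollary list can create a two-sided projective plane that was not already present in the input. Deleting a component, undoing a connected sum, or cutting along a sphere or a disc act locally and can only restrict existing embedded surfaces; filling a boundary sphere with a 3-ball could in principle glue together pieces of a would-be $\R P^2$, but any closed surface meeting the filling ball would have intersection in the ball bounded by circles on its boundary sphere, and a projective plane does not embed in $S^3$ (or $B^3$). I expect this step to be the main obstacle, since the case distinction needs to cover both the non-destructive step and each atomic operation; the reason it matters is that Lemma~\ref{l-atomic} specifically warns that a two-sided $\R P^2$ bigon would produce invalid edges, and only this hypothesis rules that out.

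With (i) and (ii) in hand, the proof concludes as follows. Lemma~\ref{l-crush} expresses the destructive phase as a finite sequence of atomic moves applied one at a time; Lemma~\ref{l-atomic}, applied inductively using (ii), shows that each move either preserves the underlying 3-manifold or performs exactly one operation from the allowed list, and produces a valid cell decomposition at every stage. The final triangulation $\trijr$ is therefore valid as well. Composing the single initial operation from (i) with the (finite) sequence of modifications produced by the atomic moves assembles a sequence of allowed operations carrying $\mfd$ to $\mfdjr$, exactly as claimed.
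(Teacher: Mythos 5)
Your proposal is correct and follows essentially the same route as the paper: the paper's proof simply invokes Lemma~\ref{l-crush} and Lemma~\ref{l-atomic} and adds the same case analysis of the non-destructive step (separating sphere, non-separating sphere, disc) that you give in part~(i). Your part~(ii) --- checking that the absence of two-sided projective planes persists through the intermediate manifolds so that Lemma~\ref{l-atomic} can be applied inductively --- is a point the paper leaves implicit, and your sketch of why the listed operations cannot create a two-sided $\R P^2$ is the right way to discharge it.
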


\begin{proof}
    This follows immediately from Lemmata~\ref{l-crush} and~\ref{l-atomic},
    and the fact that the \emph{non-destructive} act of crushing $S$ to
    a point (which precedes the sequence of atomic moves) has
    the effect of either undoing a connected sum (if $S$ is a separating
    sphere), removing an $S^2 \times S^1$ or twisted $S^2 \twisted S^1$
    summand from the connected sum decomposition
    (if $S$ is a non-separating sphere),
    or cutting along a properly embedded disc (if $S$ is a disc).
\end{proof}


\section{Non-orientable prime decomposition} \label{s-app}

We now give an application of our earlier results:
a modern approach to prime decomposition of non-orientable
manifolds based on the crushing process.

In 1995, Jaco and Tollefson described an algorithm that,
given a closed 3-manifold triangulation $\tri$, decomposes the
underlying manifold into a connected sum of prime manifolds
\cite{jaco95-algorithms-decomposition}.  In essence, it involves the
following steps:
\begin{enumerate}
    \item Enumerate all \emph{vertex normal spheres} in $\tri$.
    These are normal spheres that are represented by extreme rays of
    a high-dimensional polyhedral cone derived from the triangulation $\tri$;
    see \cite{jaco95-algorithms-decomposition} for a precise definition.%
    \label{en-jt-enum}
    \item Convert these into a (possibly much larger) collection of
    pairwise disjoint embedded spheres in $\tri$ using an intricate series of
    cut-and-paste operations.%
    \label{en-jt-paste}
    \item Cut $\tri$ open along these embedded spheres, retriangulate,
    and fill the boundaries with balls to obtain the final list of
    irreducible summands.%
    \label{en-jt-retriangulate}
\end{enumerate}

Despite its theoretical importance, the Jaco-Tollefson algorithm is both
slow and complex.  Step~\ref{en-jt-enum} requires us to enumerate
\emph{all} vertex normal spheres (of which there could be exponentially
many), which prevents us from using
highly effective optimisations based on linear programming
\cite{burton12-unknot}.  Step~\ref{en-jt-paste} is extremely complex
to implement, and could significantly increase the number of
spheres under consideration (which is already exponential in the
number of tetrahedra).
Likewise, the cut-open-and-retriangulate operation of
step~\ref{en-jt-retriangulate} is highly intricate to implement, and could
vastly increase the number of tetrahedra in the final collection of
triangulated summands.

For orientable triangulations, the Jaco-Rubinstein theory of 0-efficiency
from 2003 simplified this algorithm enormously
\cite{jaco03-0-efficiency}.
In brief, the new procedure is:
\begin{enumerate}
    \item Locate \emph{any} non-trivial normal sphere in the
    triangulation, and destructively crush this to obtain a new
    (possibly disconnected) triangulation with strictly fewer
    tetrahedra.  Repeat this step for as long as a non-trivial normal
    sphere can be found.

    \item Once no non-trivial normal spheres exist (i.e., the
    remaining triangulation is 0-efficient), each connected component of
    the triangulation represents a single prime summand.  There
    may be additional ``missing'' summands that were lost, but these can
    be reconstructed by tracking changes in homology.
\end{enumerate}

This 0-efficiency-based algorithm is much faster
(though still exponential time), and is significantly cleaner to
implement.
Moreover, it becomes far simpler to
detect and discard trivial 3-sphere summands, since
3-sphere recognition is significantly less demanding for 0-efficient
triangulations than for general inputs \cite{jaco03-0-efficiency}.
Historically this algorithm was the turning point at which
prime decomposition first became practical, and in 2004
it became the foundation for the
first real software implementation \cite{burton04-regina}.

For non-orientable triangulations, the state of the art remains the
original Jaco-Tollefson algorithm, which has still never been
implemented due to the speed and intricacy reasons outlined above.
Here we now use the results of Section~\ref{s-crush} to develop a
fast and simple prime decomposition algorithm for
non-orientable triangulations, based on 0-efficiency and
the Jaco-Rubinstein crushing process.

In this setting there is a major complication: for non-orientable manifolds,
the Jaco-Rubinstein crushing process might leave us with an
\emph{invalid triangulation}, where some edge is identified with
itself in reverse.  As noted in the detailed proof of
Lemma~\ref{l-atomic}, this can only occur if the triangulation contains
an embedded two-sided projective plane.

We employ a ``permissive'' strategy for dealing with this complication:
we run the algorithm regardless of whether there might be problems,
and after it finishes we test whether anything went wrong by looking
for invalid edges (an easy test to perform).
This permissive approach has two benefits:
\begin{itemize}
    \item There are \emph{no onerous preconditions} to test before we
    run the algorithm.\footnote{%
        The absence of two-sided projective planes is an ``onerous''
        precondition, in the sense that there is no algorithm
        known at present that can test this in polynomial time.}
    Instead we can start the algorithm immediately, oblivious to whether
    there is an embedded two-sided projective plane or not.

    \item The algorithm \emph{might still succeed} even if there is an
    embedded two-sided projective plane: if we ``get lucky'' and do not
    create an invalid edge, we still guarantee correctness.
    If we are unlucky and we do create an invalid edge during some
    atomic move,
    we prove that this can be detected
    after the fact, once the crushing process is complete.
\end{itemize}

We begin this section with Lemma~\ref{l-crush-invalid},
a non-orientable extension to Corollary~\ref{c-jrcrush} that uses the
crushing lemma to identify the possible consequences of crushing
in the presence of two-sided projective planes.
In the proof we take care to
ensure that, if an atomic move ever creates an invalid edge, then any
subsequent atomic moves \emph{preserve} the existence of invalid edges.
We follow this with the full prime decomposition
algorithm, as detailed in Algorithm~\ref{a-connsum}.

\begin{lemma} \label{l-crush-invalid}
    Let $\tri$ be a generalised triangulation of any closed compact
    3-manifold $\mfd$, and let $S$ be a normal sphere in $\tri$.
    Then, if we destructively crush $S$ using the Jaco-Rubinstein
    procedure, one of the following things happens:
    \begin{enumerate}
        \item we obtain an invalid triangulation, in which some edge is
        identified with itself in reverse;%
        \label{en-pp-invalid}
        \item we obtain a valid triangulation $\trijr$ whose underlying
        3-manifold $\mfdjr$ is obtained from $\mfd$ by zero or more of
        the following operations:
        \begin{itemize}
            \item undoing connected sums, i.e.,
            replacing some intermediate manifold $\mfd'$ with the disjoint union
            $\mfd'_1 \cup \mfd'_2$, where $\mfd' = \mfd'_1\ \#\ \mfd'_2$;
            \item deleting 3-sphere, $\R P^3$, $L_{3,1}$, $S^2 \times S^1$
            or twisted $S^2 \twisted S^1$ components.
        \end{itemize}%
        \label{en-pp-standard}
    \end{enumerate}
    Moreover, (\ref{en-pp-invalid}) can only occur if
    $\mfd$ contains an embedded
    two-sided projective plane.
\end{lemma}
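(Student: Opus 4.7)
The plan is to invoke Lemma~\ref{l-crush} to express the destructive crushing of $S$ as an ordered sequence of atomic moves on $\cell$, and then propagate Lemma~\ref{l-atomic} along this sequence while tracking when (if ever) an invalid edge is first produced.

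The non-destructive step of crushing $S$ produces a valid cell decomposition $\cell$ of a closed 3-manifold, obtained from $\mfd$ by undoing a connected sum (if $S$ is two-sided and separating) or by removing an $S^2 \times S^1$ or $S^2 \twisted S^1$ summand (otherwise). We apply the atomic moves one at a time. A close inspection of the proof of Lemma~\ref{l-atomic} shows that the pillow flattenings always preserve validity, and that a bigon flattening introduces invalid edges in exactly one situation: when the bigon's two boundary edges are identified so that the bigon itself is an embedded two-sided projective plane. In every other scenario the move produces a valid cell decomposition of a 3-manifold whose topological change is one of those allowed by Lemma~\ref{l-atomic}.

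The key technical point is persistence of invalid edges: each atomic move consists solely of further identifications among existing cells, faces and edges, so the equivalence class defining an edge can only grow, and a self-reversal inside that class is preserved by every subsequent move. Hence, if some atomic move ever flattens a two-sided projective plane bigon, then $\trijr$ is invalid, yielding conclusion~(\ref{en-pp-invalid}). Otherwise every intermediate cell decomposition is valid; because the cell decomposition is closed initially and no atomic move can introduce boundary into a closed complex, the ``cut along a disc'' and ``fill a boundary sphere'' outcomes of Lemma~\ref{l-atomic} never occur, leaving only connected sum undoings and deletions of $S^3$, $\R P^3$, or $L_{3,1}$ components. Combined with the effect of the non-destructive step, this gives conclusion~(\ref{en-pp-standard}).

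For the ``moreover'' clause, assume conclusion~(\ref{en-pp-invalid}) holds and let $\cell'$ be the cell decomposition immediately before the first invalidating bigon flattening. By the analysis above $\cell'$ is a valid cell decomposition of some closed 3-manifold $\mfd'$, and the bigon in question is an embedded two-sided projective plane in $\mfd'$. All earlier steps only undo connected sums or delete components drawn from $\{S^3, \R P^3, L_{3,1}, S^2 \times S^1, S^2 \twisted S^1\}$, none of which contains a two-sided projective plane, so this two-sided projective plane must live inside a summand of $\mfd$ and can be transported back to $\mfd$ via a standard innermost-circle argument that makes it disjoint from the connect-sum spheres. The delicate part is this last inheritance across the connected sum decomposition; everything else is routine bookkeeping against Lemma~\ref{l-atomic}.
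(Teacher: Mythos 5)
There is a genuine gap at the heart of your argument: the claim that an invalid edge, once created, automatically persists to the end of the crushing procedure. You justify this by saying that each atomic move only adds identifications, so ``a self-reversal inside that class is preserved by every subsequent move.'' This is false, and the paper's proof is built around exactly the two counterexamples. First, flattening a bigon whose two boundary edges $e_1,e_2$ are both invalid but not identified merges them into a single \emph{valid} edge: the orientation twists around $e_1$ and $e_2$ cancel, and two invalid edges disappear. Second, flattening a bigonal pillow whose two faces are identified in the orientation-reversing fashion that maps each bigon edge to itself in reverse deletes the entire pillow together with its two invalid edges. So it is a priori possible that invalid edges are created and later all destroyed, in which case one would end with a valid triangulation $\trijr$ whose topology is \emph{not} controlled by outcome~(\ref{en-pp-standard}) (the intermediate stages were not manifolds, so Lemma~\ref{l-atomic} does not apply to them). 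Your proof never rules this scenario out. A related weakness is that you propose to handle the post-invalidation moves by ``close inspection of the proof of Lemma~\ref{l-atomic},'' but that lemma assumes a cell decomposition of a manifold with no two-sided projective planes; once an invalid edge exists you are outside its hypotheses and a fresh case analysis is required (bigons with one or two invalid boundary edges, identified or not, bigonal pillows meeting invalid edges, and so on).

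The paper closes this gap with a parity argument. It first verifies that every invalid edge has both endpoints at a single vertex, and then tracks, for each vertex, the parity of the number of incident invalid edges. The first invalidating move (flattening a bigon forming a two-sided $\R P^2$) splits a vertex into two and gives each resulting vertex one new invalid edge, creating two \emph{odd} vertices; a case-by-case check shows that no subsequent atomic move can decrease the number of odd vertices, because the two moves that destroy invalid edges always destroy a pair incident to the same vertex, and vertex splits always leave at least one odd descendant of an odd vertex. Hence at least one invalid edge survives to $\trijr$, which is what outcome~(\ref{en-pp-invalid}) requires. Your treatment of the ``moreover'' clause (pulling the two-sided projective plane in an intermediate summand back to $\mfd$) is fine and matches what is implicit in the paper, but without the persistence argument the dichotomy in the lemma statement is not established.
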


\begin{proof}
    If $\mfd$ does not contain an embedded two-sided projective plane
    then this is a special case of Corollary~\ref{c-jrcrush}
    (restricted to closed manifolds),
    and it is immediate that we obtain outcome (\ref{en-pp-standard})
    above.

    If we do allow $\mfd$ to contain embedded two-sided projective
    planes, we must examine how this affects our three atomic
    operations from Lemma~\ref{l-crush}: flattening triangular pillows,
    flattening bigonal pillows, and flattening bigon faces.
    In essence, we find that things behave well around the vertices, but
    we may create \emph{invalid edges} (edges identified with
    themselves in reverse).  This occurs precisely when we flatten a bigon
    face whose edges are identified to form a two-sided projective
    plane; we describe this operation in more detail shortly.
    Note that both endpoints of any invalid edge $e$
    must be identified; that is, $e$ is incident to one and only one vertex.

    Before proceeding, we observe that
    \emph{every vertex link remains a 2-sphere under all three
    atomic operations.}
    This is because (i)~flattening a triangular or bigonal pillow removes
    an entire connected component, and so cannot \emph{introduce}
    a non-spherical vertex link; and
    (ii)~flattening a bigon face effectively collapses two curves to a
    point in the vertex links, which might split a 2-sphere link into
    multiple 2-spheres, but which cannot introduce \emph{new}
    genus or orientation-reversing paths.\footnote{%
        If a 2-sphere link does split into multiple 2-spheres,
        this means that a vertex of the cell decomposition splits
        into multiple vertices.  This happens, for instance, when the
        bigon forms an embedded sphere, and flattening the bigon
        effectively undoes a connected sum.}

    As we consider each atomic move, we show that either the move is
    consistent with outcome~(\ref{en-pp-standard}) above, or else we
    carefully study how it affects the number and location of any
    invalid edges.  By the ``location'' of an invalid edge,
    we mean the \emph{incident vertex} (of which there is only one,
    as noted above).

    Our first step is to remove triangular pillows from consideration.
    Here we observe that nothing ``goes wrong'', in that
    \emph{flattening a triangular pillow is always consistent
    with outcome~(\ref{en-pp-standard}) above, 
    and never changes the number or location of any invalid edges}.
    Let $F_1,F_2$ be the two triangular faces bounding such a pillow:
    \begin{itemize}
        \item If $F_1$ and $F_2$ are not identified then flattening the
        pillow does not change the underlying 3-manifold,
        and although the pillow may be surrounded by one or more invalid
        edges, flattening the pillow does not change their number or
        location.
        \item If $F_1$ and $F_2$ are identified in an
        orientation-preserving fashion, then the pillow is a single
        $S^3$ or $L_{3,1}$ component (with no invalid edges),
        and flattening the pillow simply removes this component.
        \item If $F_1$ and $F_2$ are identified in an
        orientation-reversing fashion then one of the vertices
        of the pillow will have a
        non-orientable link, which we know from above can never occur.
    \end{itemize}

    We next ``limit the damage'' that can occur from bigonal pillows.
    Here we show that
    \emph{flattening a bigonal pillow is either consistent with
    outcome~(\ref{en-pp-standard}) above with no change in the
    number or location of invalid edges, or else it
    removes precisely two invalid edges, both of which are
    incident to the same vertex}.
    Again, let $F_1,F_2$ be the two bigon faces bounding such a pillow:
    \begin{itemize}
        \item If $F_1$ and $F_2$ are not identified then flattening the
        pillow does not change the underlying 3-manifold,
        and again does not affect the number or location of any invalid edges.
        \item If $F_1$ and $F_2$ are identified in an
        orientation-preserving fashion, then the pillow is a single
        $S^3$ or $\R P^3$ component (with no invalid edges),
        and as before flattening the pillow simply removes this component.
        \item Suppose that $F_1$ and $F_2$ are identified in an
        orientation-reversing fashion.  There are two choices of
        identification:
        (i)~one that maps each edge of the bigon to the other,
        and (ii)~one that maps each edge to itself in reverse.
        The first option yields non-orientable vertex links, and so
        cannot occur.  The second option yields precisely two
        invalid edges, both incident to the same vertex, and the
        flattening operation removes the entire pillow along
        with both of these invalid edges.
    \end{itemize}

    This leaves us with the most problematic move: flattening a bigon
    face.  Let $e_1,e_2$ be the two edges bounding such a face.
    Here one of several things can occur:
    \begin{itemize}
        \item If $e_1$ and $e_2$ are both valid edges,
        and if they are \emph{not} identified in a way that makes
        the bigon form a two-sided
        projective plane, then things work exactly as in the proof of
        Lemma~\ref{l-atomic}: we might slice $\mfd$ open along a sphere
        and fill the resulting boundaries with 3-balls, or we might
        remove a single $\R P^2$ summand from the connected sum
        decomposition of $\mfd$, or we might simply leave the manifold
        unchanged.  All of these possibilities are consistent
        with outcome~(\ref{en-pp-standard}) above.

        Here we do not change the number of invalid edges, but we might
        change their locations.  Specifically, we might split a vertex $V$
        into multiple vertices $V_1,V_2$ (e.g., when undoing a connected
        sum), whereupon the invalid edges that
        touched $V$ will become distributed amongst $V_1$ and $V_2$ in
        some manner.  Note that we might even perform more than one such split.

        \begin{figure}[tb]
            \centering
            \includegraphics[scale=0.9]{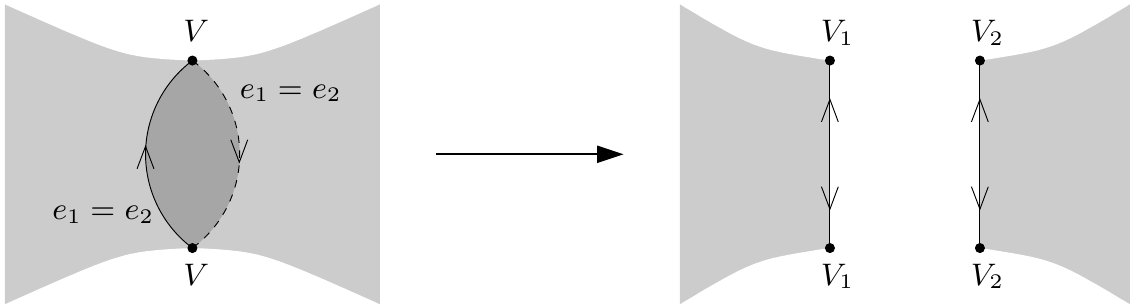}
            \caption{Flattening a bigon that forms a two-sided projective plane}
            \label{fig-createinvalid}
        \end{figure}

        \item If $e_1$ and $e_2$ are both valid edges,
        and if they \emph{are} identified in a way that makes
        the bigon form a two-sided projective plane, then the move has
        the following effect.  We cut the manifold open along this
        projective plane, and then flatten each of the two $\R P^2$ boundary
        components to a single invalid edge, as illustrated in
        Figure~\ref{fig-createinvalid}.  Note that the midpoint of each
        invalid edge has a small regular neighbourhood bounded by
        $\R P^2$ (not $S^2$ as usual).

        The outcome is that we create two new invalid edges.
        The locations change as follows: the original vertex $V$ splits
        into two vertices $V_1,V_2$; any previous invalid edges that touched
        $V$ become distributed amongst $V_1$ and $V_2$ in some manner;
        and then each of $V_1$ and $V_2$ acquires one of the new invalid
        edges that is created by the move.

        \item If one edge (say $e_1$) is valid but the other (say $e_2$)
        is invalid, then the move merges these together into a single invalid
        edge that is incident with the same vertex as the original $e_2$.
        That is, both the number and location of all invalid edges
        stays the same.

        \item If $e_1$ and $e_2$ are both invalid, then we note that they
        must both be incident to the same vertex.
        If $e_1$ and $e_2$ are invalid and \emph{not} identified, then
        the move merges $e_1$ and $e_2$ together into a single
        \emph{valid} edge (the orientation twists around $e_1$ and $e_2$
        effectively cancel each other when the edges are merged).
        The result is that we lose precisely two invalid edges, both of
        which were incident to the same vertex.

        \item If $e_1$ and $e_2$ are both invalid,
        and if they \emph{are} identified, then the bigon must
        form a sphere whose edge is identified with itself
        in reverse due to a twist on one side of the sphere,
        as illustrated in Figure~\ref{fig-crushinvalid}.
        The move essentially (i)~cuts the manifold open along this sphere,
        leaving an $\R P^2$ boundary on the side with the twist and a
        sphere boundary on the other side, and then (ii)~flattens each of
        these boundaries to a single edge, creating an invalid edge on
        the $\R P^2$ side.  The full move is illustrated in
        Figure~\ref{fig-crushinvalid}.

        The outcome is that the total number of invalid edges is
        preserved (including our original edge $e_1=e_2$, which becomes
        the flattened $\R P^2$ boundary).
        Regarding locations: again the original vertex $V$ splits into
        multiple vertices $V_1,V_2,V_3$, whereupon the invalid edges originally
        incident to $V$ become distributed amongst $V_1,V_2,V_3$ in some manner.

        \begin{figure}[tb]
            \centering
            \includegraphics[scale=0.75]{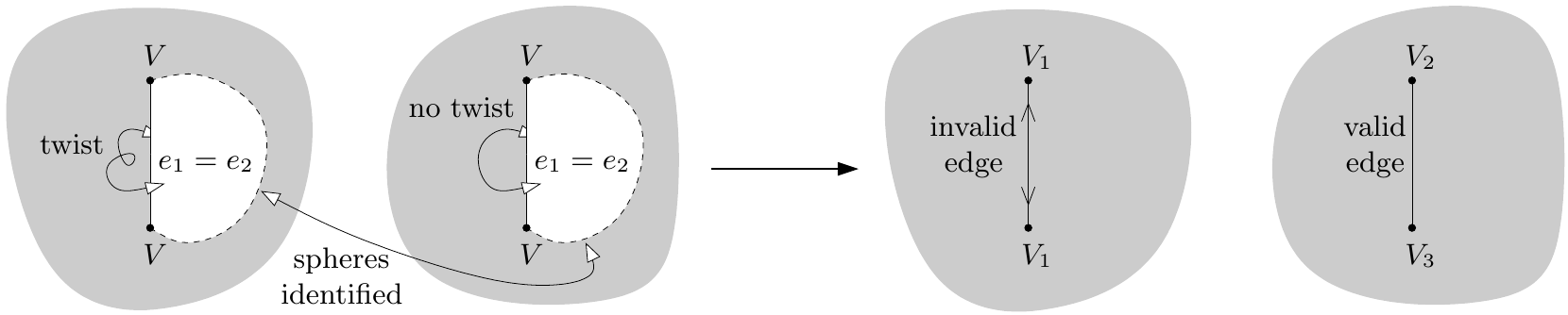}
            \caption{Flattening a bigon that joins an invalid edge to itself}
            \label{fig-crushinvalid}
        \end{figure}
    \end{itemize}

    In summary: taking into account all three atomic moves, we find
    that if no invalid edges are ever created, then we obtain
    outcome~(\ref{en-pp-standard}) from the lemma statement.
    Our final goal is to show that, if invalid edges \emph{are}
    ever created, that at least one of them survives to the end of the
    crushing procedure; that is, we obtain outcome~(\ref{en-pp-invalid}).

    This is now just a matter of parity.
    Define an \emph{odd or even vertex} to be one that is incident with an
    odd or even number of invalid edges respectively.
    The first time we create a pair of invalid edges,
    these are incident with different vertices; that is, we create two
    odd vertices.
    It is now simple to see that none of the moves can ever reduce the
    number of odd vertices:
    \begin{itemize}
        \item whenever we split a vertex $V$ and redistribute its
        incident edges amongst the resulting vertices $V_1,V_2,\ldots$,
        if the original vertex $V$ was odd then one of
        the resulting vertices $V_i$ must be odd also;
        \item if we ever create a new pair of invalid edges from a
        two-sided projective plane incident to some original odd vertex
        $V$, then the two resulting vertices $V_1$ and $V_2$ must have
        different parities, and so one of these must be odd also;
        \item whenever we lose a pair of invalid edges (either through
        flattening a bigon or flattening a bigonal pillow), these invalid
        edges are always incident to the same vertex and so parities are
        preserved.
    \end{itemize}

    It follows that, if we ever create an invalid edge at any stage,
    then we will have odd vertices remaining at the end of the crushing
    process; that is, we will have invalid edges as described in
    outcome~(\ref{en-pp-invalid}).
\end{proof}

We can now package the results of Lemma~\ref{l-crush-invalid}
into a general algorithm for computing the prime decomposition
of a triangulated 3-manifold, either orientable or non-orientable.
The structure of the algorithm follows the modern
``ready to implement'' framework presented in \cite{burton13-regina}
for the orientable case.
The process can be further improved by simplifying triangulations at
key stages of the algorithm; we omit this here, but details
can be found in \cite{burton13-regina}.

\begin{algorithm}[Prime decomposition] \label{a-connsum}
    Given an input triangulation $\tri$ of any closed connected 3-manifold
    $\mfd$, the following algorithm will either decompose $\mfd$ into a
    connected sum of prime manifolds, or else prove that
    $\mfd$ contains an embedded two-sided projective plane.
    \begin{enumerate}
        \item \label{step-homology}
        Compute the first homology of $\tri$, and let
        $r$, $t_2$ and $t_3$ denote the $\Z$~rank,
        $\Z_2$~rank and $\Z_3$~rank respectively.
        \item Create an input list $\mathcal{L}$ of triangulations to
        process, initially containing just $\tri$, and an output list
        $\mathcal{O}$ of prime summands, initially empty. \\
        While $\mathcal{L}$ is non-empty:
        \begin{itemize}
            \item Let $\mathcal{N}$ be the next triangulation in
            the list $\mathcal{L}$.  Remove $\mathcal{N}$ from
            $\mathcal{L}$, and test whether $\mathcal{N}$
            has a non-trivial normal sphere $F$.%
            \label{step-sphere}
            \begin{itemize}
                \item
                If there is such a normal sphere,
                then perform the Jaco-Rubin\-stein crushing procedure on $F$.
                \begin{itemize}
                    \item If the resulting triangulation has an invalid
                    edge, then terminate with the statement that the
                    input manifold contains an embedded two-sided
                    projective plane.
                    \item If the resulting triangulation has no invalid
                    edges, then add each connected component
                    of the resulting triangulation
                    back into the list $\mathcal{L}$.
                \end{itemize}
                \item
                If there is no such normal sphere, then
                append $\mathcal{N}$ to the output list $\mathcal{O}$.
            \end{itemize}
        \end{itemize}
        \item Compute the first homology of each triangulation in
        the output list $\mathcal{O}$, and let $r'$, $t_2'$ and
        $t_3'$ denote the sums of the $\Z$~ranks, $\Z_2$~ranks and $\Z_3$~ranks
        respectively.
        \item Append $(t_2-t_2')$ copies of $\R P^3$ and
        $(t_3-t_3')$ copies of $L_{3,1}$ to $\mathcal{O}$.
        If the input triangulation was orientable, append
        $(r-r')$ copies of $S^2 \times S^1$ to $\mathcal{O}$,
        and otherwise append $(r-r')$ copies of the twisted product
        $S^2 \twisted S^1$ to $\mathcal{O}$.
    \end{enumerate}
    If we did not terminate earlier due to an invalid edge,
    then the final output list $\mathcal{O}$ will contain
    a collection of triangulated prime manifolds
    $\mathcal{O}_1,\ldots,\mathcal{O}_k$
    for which the original manifold $\mfd$ can be expressed as the 
    connected sum $\mathcal{O}_1 \# \mathcal{O}_2 \# \ldots \#
    \mathcal{O}_k$.
\end{algorithm}

The correctness of this algorithm follows immediately from
Lemma~\ref{l-crush-invalid}
(the changes in $\Z$, $\Z_2$ and $\Z_3$ ranks indicate the number of
$S^2 \times S^1$ or $S^2 \twisted S^1$,
$\R P^3$, and $L_{3,1}$ summands that were lost respectively).
If the input triangulation contains $n$ tetrahedra, it is clear that we
terminate after crushing at most $n$ normal spheres, since each crushing
operation strictly reduces the total number of tetrahedra in the input
list $\mathcal{L}$.  Note that some of the output manifolds
$\mathcal{O}_k$ might be trivial (i.e., redundant 3-sphere summands);
however, such trivial summands are easy to detect, as outlined below.

As presented above, this algorithm
gives the \emph{summands} in the connected sum
decomposition, but these may not uniquely define the original
manifold (since there can be orien\-ta\-tion-related decisions to make when
performing the
connected sum operation).  This can be resolved by tracking orientations
explicitly through the crushing process, a straightforward but
slightly messy enhancement to the algorithm that we do not describe
in detail here.

We finish this section with some implementation notes:
\begin{itemize}
    \item
    There are well-known polynomial-time procedures for computing homology in
    step~(\ref{step-homology}), based on Smith normal form; see
    \cite{donald91-homology,dumas03-homology} for examples.

    \item
    In step~(\ref{step-sphere}) we must locate a non-trivial normal sphere,
    if one exists.  Traditionally, one does this by enumerating all
    \emph{quadrilateral vertex normal surfaces};
    see \cite{burton13-regina} for details on what this means and why
    it works.
    A newer (and experimentally much faster) alternative is to
    make a targeted search for a normal sphere using branch-and-bound
    techniques from combinatorial optimisation;
    see \cite{burton12-unknot} for details.

    \item
    It is easy to eliminate trivial 3-sphere summands from the output list.
    If an output triangulation $\mathcal{O}_i$
    has non-trivial homology then it is a non-trivial summand;
    otherwise $\mathcal{O}_i$ must be 0-efficient,
    whereupon Jaco and Rubinstein show that $\mathcal{O}_i$ is trivial
    if and only if (i)~it has more than one vertex, or
    (ii)~it contains an embedded \emph{almost normal sphere}.
    See \cite{jaco03-0-efficiency} for details on almost normal spheres,
    and see \cite{burton10-quadoct,burton12-unknot} for fast
    algorithms for detecting them.
\end{itemize}


\section{Minimal triangulations} \label{s-app-minimal}

Our final application uses crushing techniques to study the
combinatorics of minimal triangulations of both orientable and
non-orientable manifolds.  A \emph{minimal triangulation} of a
3-manifold $\mfd$ is one that triangulates $\mfd$ using the fewest
possible tetrahedra.  Minimal triangulations are the focus of
many censuses of low-complexity 3-manifolds \cite{matveev03-algms};
in such censuses
it is common to focus on \emph{\ppirr} 3-manifolds, in which every embedded
2-sphere bounds a 3-ball, and in which there are no embedded
two-sided projective planes.

There are many simple combinatorial properties that a minimal triangulation
of a closed {\ppirr} manifold must satisfy (for instance, with a few
exceptions it must have just one vertex, and no low-degree edges).
Often these properties are proven using ad-hoc constructions that show how,
if a property is not satisfied, then the triangulation can be simplified
to use fewer tetrahedra.

As part of their study on 0-efficient triangulations
\cite{jaco03-0-efficiency}, Jaco and Rubinstein prove several of these
properties for orientable manifolds using 0-efficiency techniques.
Although these are in most cases rederivations of known results, their
proofs essentially \emph{unify} these results: instead of a collection of
ad-hoc constructions, they show how these results all follow immediately
from the observation that (modulo a few exceptions)
a minimal triangulation must be 0-efficient.

Here we extend these arguments to the non-orientable setting.  Again
most of the results we prove are already known; the purpose of this
section is to unify them as immediate consequences of 0-efficiency.
Much of this section follows Jaco and Rubinstein directly,
and so we keep the exposition brief and only present details where the
arguments diverge.

All of the results in this section rely on the following core
observation, which Jaco and Rubinstein prove for orientable manifolds
\cite{jaco03-0-efficiency}, and which we prove here in the general case.

\begin{theorem} \label{t-minimal}
    Let $\mfd$ be a closed {\ppirr} 3-manifold (which may be orientable
    or non-orientable) that is not $\R P^3$ or $L_{3,1}$.
    Then every minimal triangulation of $\mfd$ is 0-efficient.
\end{theorem}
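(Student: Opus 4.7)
The plan is to proceed by contradiction. Suppose $\tri$ is a minimal triangulation of $\mfd$ that is not 0-efficient; then $\tri$ contains some non-trivial normal sphere $S$. I apply the Jaco-Rubinstein crushing procedure to $S$. Since $\mfd$ is \ppirr{} it contains no embedded two-sided projective plane, so Lemma~\ref{l-crush-invalid} applies in its favourable outcome and yields a valid triangulation $\trijr$ of some 3-manifold $\mfdjr$. Because $S$ is non-trivial (i.e.\ not a union of vertex linking surfaces), the observation following Definition~\ref{d-jrcrush} guarantees that $\trijr$ has strictly fewer tetrahedra than $\tri$.

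The strategy from here is to show that some connected component of $\mfdjr$ is homeomorphic to $\mfd$; the corresponding component of $\trijr$ will then triangulate $\mfd$ using strictly fewer tetrahedra than $\tri$, contradicting the minimality of $\tri$. By Lemma~\ref{l-crush-invalid}, $\mfdjr$ is built from $\mfd$ by a sequence of operations that either undo connected sums or delete components of type $S^3$, $\R P^3$, $L_{3,1}$, $S^2 \times S^1$, or $S^2 \twisted S^1$. Since $\mfd$ is \ppirr{} it is irreducible and hence prime, so any connected sum decomposition of $\mfd$ must use $S^3$ as one of its summands. A straightforward induction on the operation sequence then shows that every intermediate manifold is a disjoint union of one copy of $\mfd$ together with finitely many copies of $S^3$. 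The hypotheses that $\mfd$ is not $\R P^3$ or $L_{3,1}$, combined with \ppirr{} (which rules out $\mfd \in \{S^2 \times S^1,\ S^2 \twisted S^1\}$), then ensure that the deletion step can never remove the $\mfd$ component itself.

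One degenerate case remains: if $\mfd = S^3$, the $\mfd$ component could in principle be deleted at some step. However, this case is handled trivially, because the minimal triangulation of $S^3$ uses a single tetrahedron whose only normal spheres are vertex linking, so the hypothesis ``$\tri$ is not 0-efficient'' fails at the outset and we never enter the contradiction setup above. The main conceptual obstacle is the bookkeeping in the inductive argument that the $\mfd$ component survives every step of the operation sequence; once this is settled, the overall structure of the proof follows Jaco and Rubinstein's original treatment of the orientable case almost verbatim, with Lemma~\ref{l-crush-invalid} doing all of the additional work needed to extend the argument to non-orientable manifolds.
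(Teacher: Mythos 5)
Your proposal is correct and follows essentially the same route as the paper: crush a non-trivial normal sphere, use the classification of possible topological changes (the paper cites Corollary~\ref{c-jrcrush}, you cite its closed-manifold specialisation Lemma~\ref{l-crush-invalid}) together with primeness of a {\ppirr} manifold to see that a component of the crushed triangulation still triangulates $\mfd$ with fewer tetrahedra, and handle $S^3$ separately by noting its one-tetrahedron minimal triangulations are already 0-efficient. The paper states this more tersely (``follows immediately from Corollary~\ref{c-jrcrush}''), whereas you spell out the bookkeeping that the $\mfd$ component survives; the only cosmetic slip is that $S^3$ has two one-tetrahedron minimal triangulations, not one, but both are 0-efficient so nothing changes.
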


\begin{proof}
    This follows immediately from Corollary~\ref{c-jrcrush}:
    if a triangulation of $\mfd$ is not 0-efficient,
    then performing the Jaco-Rubinstein crushing procedure on a normal
    2-sphere will produce a new triangulation of $\mfd$ with fewer
    tetrahedra.

    If $\mfd$ is the 3-sphere then crushing might remove $\mfd$
    entirely, but in this case we simply note that both minimal
    triangulations of $S^3$ (each with one tetrahedron) are 0-efficient,
    as observed earlier in \cite{jaco03-0-efficiency}.
\end{proof}

We now turn to proving various properties of minimal triangulations.
Our first result---that a minimal triangulation must have one
vertex---was originally proven by Matveev for orientable manifolds
\cite{matveev90-complexity} and Martelli and Petronio in the general case
\cite{martelli02-decomp}, both working in the dual setting of special spines.
Jaco and Rubinstein rederive this for orientable manifolds
as a simple corollary of 0-efficiency \cite{jaco03-0-efficiency},
and here we observe that their argument translates directly to the
non-orientable case.

\begin{corollary}
    Let $\tri$ be a minimal triangulation of a closed
    {\ppirr} 3-manifold (which may be orientable or non-orientable)
    that is not $S^3$, $\R P^3$ or $L_{3,1}$.
    Then $\tri$ contains precisely one vertex.
\end{corollary}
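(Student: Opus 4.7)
The plan is to use Theorem~\ref{t-minimal} to reduce to the 0-efficient setting, and then derive a contradiction from the assumption of multiple vertices, following the strategy of Jaco and Rubinstein \cite{jaco03-0-efficiency} in the orientable case. Since $\mfd$ is neither $S^3$, $\R P^3$, nor $L_{3,1}$, Theorem~\ref{t-minimal} tells us that $\tri$ is 0-efficient, so the only normal 2-spheres in $\tri$ are vertex-linking.

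Suppose for contradiction that $\tri$ has distinct vertices $V_1$ and $V_2$. Since the 1-skeleton of $\tri$ is connected, I would choose an edge $e$ joining $V_1$ to $V_2$, and let $S$ be the frontier of a small regular neighbourhood of $V_1 \cup e \cup V_2$. Because $\mfd$ is {\ppirr}, the sphere $S$ bounds a 3-ball in $\mfd$, and this ball manifestly contains both $V_1$ and $V_2$; in particular $S$ is not the link of any single vertex of $\tri$. The goal is then to normalise $S$ within $\tri$ to produce a normal 2-sphere $S'$ that still separates some vertex of $\tri$ from another. Any such $S'$ is non-vertex-linking, contradicting 0-efficiency of $\tri$ and forcing the vertex count to be exactly one.

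The main obstacle is the normalisation step: one must justify that the vertex-separating content of $S$ cannot be entirely collapsed into vertex links by the compressions and isotopies required to put $S$ into normal form. Intuitively, each bigon compression either leaves the partition of the vertex set across $S$ unchanged or splits off a vertex-linking component that itself bounds a ball containing exactly one vertex, so at least one surviving component of the normalisation must separate two vertices of $\tri$. This is precisely the step carried out by Jaco and Rubinstein in the orientable case \cite{jaco03-0-efficiency}; since their analysis depends only on 0-efficiency and on the local combinatorics of compressions, it transfers verbatim to the non-orientable setting considered here, yielding the single-vertex conclusion.
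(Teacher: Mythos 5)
Your setup---invoking Theorem~\ref{t-minimal} to get 0-efficiency, surrounding an edge $e$ that joins two distinct vertices by a sphere $S$, and normalising $S$ with $e$ acting as a barrier---is exactly the paper's, but the endgame has a genuine gap. You claim that some component of the normalised surface must still separate two vertices of $\tri$, and that this contradicts 0-efficiency. That step fails: precisely \emph{because} $\tri$ is 0-efficient, every normal 2-sphere in $\tri$ is vertex-linking, so the normalisation of $S$ necessarily terminates in a (possibly empty) union of vertex-linking spheres disjoint from $e$; no non-trivial normal sphere can ever appear. The ``vertex-separating content'' of $S$ is not conserved under compressions in the way you suggest: the terminal configuration can simply be the two vertex links, each cutting off a ball around a single vertex, with no component separating $V_1$ from $V_2$ non-trivially. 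A concrete red flag is that your argument never uses the hypothesis $\mfd \neq S^3$ except when invoking Theorem~\ref{t-minimal}, where it is not needed (that theorem excludes only $\R P^3$ and $L_{3,1}$). If your argument were sound it would prove that \emph{every} closed 0-efficient {\ppirr} triangulation has one vertex, which is false: $S^3$ admits two-vertex 0-efficient triangulations, and in them the sphere around $e$ does normalise away to vertex links.

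The correct conclusion of the barrier argument is topological rather than a direct contradiction with 0-efficiency. Since the normalisation sweeps out a region disjoint from $e$ and ends in vertex-linking spheres, $\mfd$ is exhibited as a punctured 3-ball (the side containing $e$) whose boundary spheres are capped off by the balls around the vertices; hence $\mfd \cong S^3$. Only at this point does the hypothesis $\mfd \neq S^3$ enter, and it is what rules out the possibility of $\geq 2$ vertices.
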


\begin{proof}
    We first show that any 0-efficient triangulation with $\geq 2$ vertices
    must represent the 3-sphere.  This is a direct copy of Jaco and
    Rubinstein's barrier surface argument from
    \cite[Proposition~5.1]{jaco03-0-efficiency}, and we do not repeat it
    here.  The essential details are as follows.
    
    If $\tri$ has two distinct vertices, then it has some edge $e$ that
    joins them.  Place a sphere $S$ around $e$, and attempt to normalise
    $S$.  Since the edge $e$ acts as a barrier to normalisation, and
    since the triangulation is 0-efficient, $S$ must normalise to a
    (possibly empty) union of vertex linking spheres disjoint from $e$.
    In this case we see that the underlying 3-manifold is obtained from
    a punctured 3-ball (containing $e$) whose boundary spheres
    (the vertex links) are filled with 3-balls on the other side
    (where the vertices lie); that is, the manifold is $S^3$.

    The result is now a simple application of Theorem~\ref{t-minimal}:
    since $\tri$ is minimal it must be 0-efficient, and by the argument
    above it cannot have $\geq 2$ vertices.
\end{proof}

We move now to properties of the edges and faces of a minimal
triangulation.  Here we prove that no edge can bound an embedded disc,
a new result for non-orientable manifolds
that extends the orientable result of Jaco and Rubinstein
\cite{jaco03-0-efficiency}.
This allows us to unify the observations that
no edge can have degree one
(previously shown by Matveev \cite{matveev98-or6} and the author
\cite{burton04-facegraphs} for orientable versus non-orientable manifolds),
and that no face can form a cone
(previously shown by Martelli and Petronio \cite{martelli01-or9}
and the author \cite{burton04-facegraphs} for orientable versus
non-orientable manifolds).

\begin{corollary}
    Let $\tri$ be a minimal triangulation of a closed
    {\ppirr} 3-manifold $\mfd$ (which may be orientable
    or non-orientable) that is not $S^3$, $\R P^3$ or $L_{3,1}$.  Then:
    \begin{itemize}
        \item no edge of $\tri$ bounds an embedded disc in $\mfd$;
        \item no edge of $\tri$ has degree one;
        \item no face $F$ of $\tri$ has two of its edges identified
        to form a cone (regardless of whether all three edges of
        $\Delta$ are identified or not).
    \end{itemize}
\end{corollary}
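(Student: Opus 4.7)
The plan is to leverage Theorem~\ref{t-minimal} (so that $\tri$ is 0-efficient) together with the fact that $\tri$ has exactly one vertex $v$ (the previous corollary). My strategy is to establish the first bullet first, via a normal surface argument, and then to deduce the remaining two bullets by producing explicit embedded discs bounded by the edges in question.

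For the first bullet, I would assume for contradiction that some edge $e$ bounds an embedded disc $D$ in $\mfd$. Let $N(D)$ be a regular neighbourhood of $D$; this is a 3-ball and its boundary $S = \bdry N(D)$ is a 2-sphere, with the vertex $v$ sitting inside $N(D)$ (because $v \in e \subset D$). I would then apply the standard normalisation procedure to $S$. Since $\tri$ is 0-efficient, every resulting normal sphere must be vertex linking, and since $\tri$ has only one vertex, at most one copy of the vertex link $L_v$ can appear (several disjoint copies would have to bound disjoint small balls around the single vertex $v$, an impossibility). The normalisation cannot be empty either, since $N(D)$ actually contains $v$. One then shows, by tracking the compressions used during normalisation---each topologically trivial in $\mfd$ because $\mfd$ is $\mathbb{P}^2$-irreducible---together with the fact that $S$ encloses a full edge of $\tri$ rather than a small arc near $v$, that both complementary regions of $S$ in $\mfd$ must be 3-balls. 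This forces $\mfd = S^3$, contradicting the hypothesis.

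For the second bullet, suppose an edge $e$ of $\tri$ has degree one. Then there is a unique tetrahedron $\Delta$ containing $e$, and the two triangular faces of $\Delta$ meeting $e$ must be identified so that the edge link of $e$ closes up into a circle. Validity of $\tri$ forbids identifications that reverse $e$, so the identification fixes each endpoint of $e$. The planar triangle inside $\Delta$ spanned by $e$ and an interior point of the edge of $\Delta$ opposite to $e$ then has its two non-$e$ sides identified together by this face gluing, descending to an embedded disc in $\mfd$ with boundary exactly $e$. This contradicts the first bullet.

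For the third bullet, if a face $F$ of $\tri$ has two of its three edges identified so as to form a cone, then $F$ itself descends to an embedded disc in $\mfd$ whose boundary is the remaining edge of $F$. Again this contradicts the first bullet. The principal obstacle is the first bullet: 0-efficiency cleanly restricts the normalisation of $S$, but extracting a genuine topological contradiction requires careful analysis of how the ambient full edge $e$ inside $N(D)$ constrains the possible isotopies and compressions, ultimately ruling out any outcome except $\mfd = S^3$.
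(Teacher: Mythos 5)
Your overall strategy matches the paper's: use Theorem~\ref{t-minimal} to get 0-efficiency, run a barrier-surface normalisation argument for the first bullet, and then derive the other two bullets by exhibiting embedded discs bounded by edges. The first bullet is fine at the level of detail the paper itself gives (though your parenthetical claim that at most one copy of the vertex link can appear is not justified---parallel nested copies of a vertex-linking sphere are perfectly possible, and the argument does not need to exclude them). The problems are in the discs you construct for the second and third bullets.

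For the degree-one edge $e$, say $e$ is the edge $01$ of $\Delta = 0123$: the two faces of $\Delta$ containing $e$ are $012$ and $013$, and degree one forces these to be glued with $0\mapsto 0$, $1\mapsto 1$, $2\mapsto 3$. Your triangle, spanned by $e$ and an interior point $q$ of the opposite edge $23$, has its two non-$e$ sides $0q$ and $1q$ lying in the faces $023$ and $123$---which are \emph{not} the faces identified by the degree-one gluing---so the claimed identification of those sides simply does not happen, and you do not obtain a disc bounded by $e$. The roles must be swapped: take the triangle spanned by the opposite edge $23$ and an interior point $p$ of $e$; its sides $2p$ and $3p$ lie in $012$ and $013$ and \emph{are} identified by the gluing, yielding a disc bounded by the loop $23$ (not by $e$), which is what contradicts the first bullet. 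For the cone face, your assertion that $F$ ``descends to an embedded disc'' is false in exactly the situation at hand: by the one-vertex corollary the apex of the cone is identified with the vertex on its boundary, so the disc's interior meets its boundary, and the remaining edge may additionally be identified with the cone edges. The paper repairs this by isotoping the interior of the cone off the offending vertex (and, in the second case, off the boundary edge) before invoking the first bullet; without that step the contradiction is not available. Both gaps are fixable, but as written the constructions fail.
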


\begin{proof}
    We first observe that, by Theorem~\ref{t-minimal}, $\tri$ must be
    0-efficient.  From here we follow directly the arguments
    that Jaco and Rubinstein use in the orientable case.

    For the first result we use another barrier surface argument;
    this follows the proof of
    \cite[Proposition~5.3]{jaco03-0-efficiency}, and again we do not repeat
    the details here.  In essence, if some edge $e$ bounds an embedded
    disc, then we place a sphere $S$ around this disc and normalise.
    Again the edge $e$ acts as a barrier to normalisation, and so
    by 0-efficiency $S$ must normalise to a (possibly empty) union of
    vertex linking spheres disjoint from $e$, whereupon the same argument as
    before shows that $\mfd$ must be $S^3$.

    For the second result we follow \cite[Corollary~5.4]{jaco03-0-efficiency}.
    If some edge $e$ in some tetrahedron $\Delta$ has degree one,
    then the opposite edge of $\Delta$ becomes a loop that bounds an
    embedded disc slicing through both $\Delta$ and $e$, contradicting
    the first result.

    For the third result we generalise the argument of
    \cite[Corollary~5.4]{jaco03-0-efficiency} (which works in the
    more restricted setting where $\mfd$ is orientable and the third
    edge of the cone is not identified with the others).
    Suppose the face $F$ has two edges identified to form a cone, and
    denote these two identified edges by $e$.  If we denote the third
    edge of $F$ by $e'$, then we observe that the cone itself forms a disc
    bounded by $e'$.  We can isotope (slide) the relative interior of this
    disc through the manifold $\mfd$ to make an embedded disc as follows:
    \begin{itemize}
        \item If edge $e'$ is not identified with $e$, then the only
        way this cone might intersect itself is if the vertex at the centre
        of the cone is identified with the vertex on the boundary,
        as illustrated in Figure~\ref{fig-disc-vertex}.
        Here we simply push the relative interior of the disc
        off the boundary vertex, as illustrated.

        \begin{figure}[tb]
        \centering
        \subfigure[Pushing away from the boundary vertex]{%
            \label{fig-disc-vertex}%
            \includegraphics[scale=0.75]{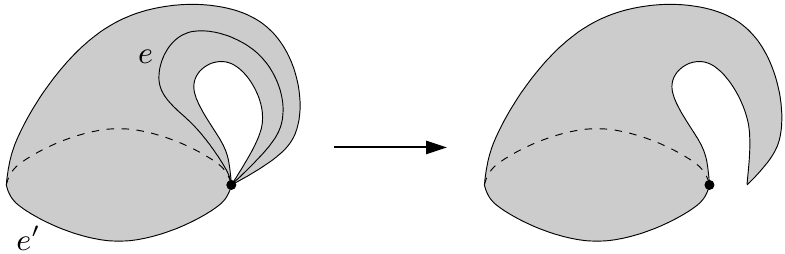}}
        \hspace{0.5cm}
        \subfigure[Pushing away from the boundary edge]{%
            \label{fig-disc-edge}%
            \includegraphics[scale=0.75]{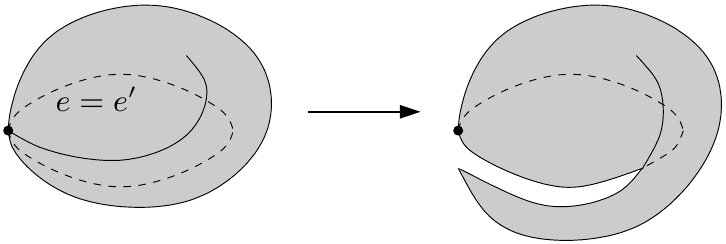}}
        \caption{Pushing the interior of a cone away from its boundary}
        \end{figure}

        \item If edges $e'$ and $e$ are identified, then the
        edge on the boundary of the cone is identified with the
        ``radius'' edge leading to the apex of the cone, as illustrated
        in Figure~\ref{fig-disc-edge}.  Once again we make the
        disc embedded, this time by pushing the interior
        of the disc away from the boundary edge, as illustrated.
    \end{itemize}
    In either case we obtain an embedded disc bounded by the edge $e'$,
    which contradicts the first result above.
\end{proof}
%
%

\bibliographystyle{amsplain}
\bibliography{pure}

\end{document}